\newcommand{ \PP }{{\mathbb P}}
\newcommand{ \CC }{{\mathbb C}}
\newtheorem{thm}{Theorem}[section]
 \newtheorem{cor}[thm]{Corollary}
 \newtheorem{lem}[thm]{Lemma}
 \newtheorem{prop}[thm]{Proposition}
 \theoremstyle{definition}
 \newtheorem{defn}[thm]{Definition}
 \theoremstyle{remark}
 \newtheorem{rem}[thm]{Remark}
\begin{document}

\title{Refined methods for the identifiability of tensors}

\author[C. Bocci]{Cristiano Bocci}
\address[C. Bocci]{Dipartimento di Ingegneria dell'Informazione e Scienze Matematiche 
\\ Universit\`{a} di Siena \\ Pian dei Mantellini 44 \\ 53100 Siena,  Italia}
\email{cristiano.boccii@unisi.it}
\urladdr{\tt{http://www.mat.unisi.it/newsito/docente.php?id=16}}

\author[L. Chiantini]{Luca Chiantini}
\address[L. Chiantini]{Dipartimento di Ingegneria dell'Informazione e Scienze Matematiche 
\\ Universit\`{a} di Siena \\ Pian dei Mantellini 44 \\ 53100 Siena,  Italia}
\email{luca.chiantini@unisi.it}
\urladdr{\tt{http://www.mat.unisi.it/newsito/docente.php?id=4}}

\author[G. Ottaviani]{Giorgio Ottaviani}
\address[G. Ottaviani]{Dipartmento di Matematica e Informatica 'Ulisse Dini'\\ 
Universit\`{a} di Firenze \\ Viale Morgagni 67/A, \\  50134 Firenze , Italia}
\email{ottavian@math.unifi.it}
\urladdr{\tt{}}

\begin{abstract} 
We prove that the general tensor of size $2^n$ and rank $k$ has a unique decomposition as the sum of decomposable tensors if $k\le 0.9997\frac{2^n}{n+1}$ (the constant $1$ being the optimal value).
Similarly, the general tensor of size $3^n$ and rank $k$ has a unique decomposition as the sum of decomposable tensors if $k\le 0.998\frac{3^n}{2n+1}$ (the constant $1$ being the optimal value).

Some results of this flavor are obtained for tensors of any size, 
but the explicit bounds obtained are weaker.

\end{abstract}

\subjclass{}

 \maketitle

\thispagestyle{empty}

\section{Introduction}

We are interested in the problem to decompose a tensor in 
$\CC^{a_1+1}\otimes\ldots\otimes\CC^{a_q+1}$
as a sum of decomposable tensors. We study the decomposition of the general tensor
of given rank $k$. Conditions which guarantee the uniqueness of this decomposition 
are quite important in the applications \cite{KB}.
Indeed, many decomposition algorithms converge to {\it one} decomposition, so that a 
uniqueness result guarantees that the decomposition found is the one we looked for. 
Even from a purely theoretical point of view, the study of the decomposition shows 
some beautiful and not expected phenomena. After a look at the table
 in section \ref{gener}, we see that there are some exceptional sporadic 
 cases which are intriguing.

It is well known that when $k$ is bigger that the critical value
$$k_c:=\frac{\prod_{i=1}^q(a_i+1)}{1+\sum_{i=1}^q a_i}$$
then the decomposition can  never be unique (see the introduction of \cite{BC12}).
Only one case is known when $k_c$
is an integer and there is a unique decomposition for tensors of rank equal 
to $k_c$, namely when $q=3$ and {$a_1=1$, $a_2=a_3$.}

So, let us  consider the range $k<k_c$, where the problem can be understood better. 

Indeed, we consider two different cases, where the behavior is quite different. 
Let us order the numbers $a_i$, so that $a_1\le\ldots\le a_q$. 
The first case is when $a_q\ge \prod_{i=1}^{q-1}(a_i+1)-\left(\sum_{i=1}^{q-1}a_i\right)$,
that is when the last dimension is much bigger with respect to the others.
In this case there are always values of $k< k_c$ such that the general 
tensor of rank $k$ has a not unique decomposition. This case is completely 
described, {see Corollary \ref{corunbal}}.

The second remaining case is when $a_q\le \prod_{i=1}^{q-1}(a_i+1)-
\left(1+\sum_{i=1}^{q-1}a_i\right)$.
We believe in this case that the decomposition is almost always unique (for $k< k_c$), 
with the exception  of few cases which we list in section \ref{gener}
and have been studied in previous papers \cite{BC12, CO, CMO}. 
Further references can be found in \cite{CO}.
In general, we believe that the aforementioned list of exceptional cases is complete.

We illustrate some situations,  where our analysis cover almost all the possible ranks.

In the case of many copies of $\PP^1$, we prove (Theorem \ref{manyp1}) 
that the general tensor of rank $\le\frac{4095}{4096}k_c$ has a unique 
decomposition. Notice that $\frac{4095}{4096}=0.9997\ldots$, so we are very close 
to cover the range $k\le k_c$.

In the case of many copies of $\PP^2$, we prove (Theorem \ref{manyp2}) 
that the general tensor of rank $\le\frac{728}{729}k_c$ has a unique decomposition.
Notice that $\frac{728}{729}=0.998\ldots$. A similar result for many copies of $\PP^3$
is listed in the same section.

To give some evidence to our guess, that only a small set of exceptional cases occur, 
in the range $a_q\le \prod_{i=1}^{q-1}(a_i+1)-\left(1+\sum_{i=1}^{q-1}a_i\right)$,
we have implemented an algorithm that uses the concept of weak defectivity \cite{CC}.
With respect to the algorithm implemented in \cite{CO}, it is much faster, 
because it reduces the problem to numerical linear algebra operation,
while the algorithm in \cite{CO} needed Gr\"obner basis computations.

We can prove that the list of exceptional cases, appearing at the end of section \ref{gener} 
is complete for all $(a_1,\ldots, a_q)$ such that
$\prod_{i=1}^q(a_i+1)\le 100 $ (see Theorem \ref{listcomplete}).

Our general technique goes back to the seminal paper of Strassen \cite{Str},
and we owe a lot to his point of view. 
Strassen proved the case $c$ even of our Theorem \ref{stra}, which provides
a starting point for general results on cubic and general tensors (Theorem \ref{cub} and 
Theorem \ref{theorgen}).

In fact, we point out that our technique is inductive: once we know
that a particular Segre product $X$ of projective spaces satisfies 
the $k$-tangency condition in Lemma \ref{ml}, and consequently is $k$-identifiable 
for $k\leq \alpha k_c$, where $\alpha$ is some positive real $\leq 1$, then 
the same happens to be true for a larger product $X\times \PP^a$. 

This principle, which is indeed our main tool in the paper, is enlightened
in Corollary \ref{mlcor}. We hope that it will produce even more interesting
results, when applied to specific types of tensors that people working in
Multilinear Algebra are considering.

A consequence of this technique is a result on the dimension of secant varieties 
(Corollary \ref{corgen}), which can be seen as a generalization, to any 
number of factors, of previously known results in the case of three factors.
\smallskip
    
Let we finish with a short account of the status of the art, for
the identifiability of binary tensors, i.e. tensors in the span of
$\PP^1\times\dots\times\PP^1$. 
After the paper of Strassen \cite{Str}, 
and using methods of Algebraic Geometry, 
Elmore, Hall and Neeman proved in
\cite{EHN} the following asymptotic result: when the number $m$ of factors 
is ``very large" with respect to $k,a$, then the Segre product 
$\PP^a\times\dots\times\PP^a$ is $k$-identifiable.  
A much more precise  bound  for identifiability of binary products
was  obtained by Allman, Matias and Rhodes. In \cite{AMR} (Corollary 5)
they proved that the product of $m$ copies of $\PP^1$ is $k$-identifiable when
$m> 2\lceil\log_2 (k+1)\rceil +1$. Thus, they
gave a lower bound for $2^m$ which is quadratic with respect to $k+1$. 
Successively, using Geometric methods as well as a result by
Catalisano, Gimigliano and Geramita (\cite{CGG}),  
the first and second authors, in \cite{BC12} improved the bound, 
showing that a product of $m>5$ copies of $\PP^1$
is $k$-identifiable for all $k$ such that $k+1\leq 2^{m-1}/m$.
The case of $5$ copies of $\PP^1$ was shown to be exceptional.
The bound, which happened to be the best known up to now, 
is substantially improved in the present paper.

\section{Preliminaries}
We follow \cite{Land} for basic facts about the geometric point of view on tensors. 
For any irreducible projective varieties $X$, we denote by $S_k(X)$ the 
$k$-th secant variety of $X$, which is the Zariski closure of the set 
$\bigcup_{x_1,\ldots, x_k\in X}\langle x_1,\ldots, x_k\rangle$. 
In other words, $S_k(X)$ is the Zariski closure of the set of 
elements having $X$-rank equal to $k$.

We recall, from \cite{CO} Definition 2.1, the following:

\begin{defn} $X$ is called $k$-identifiable if the general element of  
$S_k(X)$ has a unique decomposition as the sum of $k$ elements of $X$.
\end{defn}

Those who wonder why we ask for the uniqueness just for a {\it general}
element of $S_k(X)$, should consider that {for any $k\ge 2$} there are
always points of $S_k(X)$ {which have rank smaller than $k$.}

Notice that $k$-identifiable implies $(k-1)$-identifiable, and so on. 

A fundamental Geometric tool for the analysis of the identifiability of tensors,
is Proposition 2.4 in \cite{CO}, which is essentially a consequence of Terracini Lemma.

\begin{prop} \label{wd-id}
If there exists a set of $k$ particular points 
$x_1,\ldots x_k\in X$, such  that the span 
$\langle{\mathbb T}_{x_1}X,\ldots, {\mathbb T}_{x_k}X\rangle$ contains 
${\mathbb T}_xX$ only if $x=x_i$ 
for some $i=1,\ldots k$, then $X$ is $k$-identifiable.
\end{prop}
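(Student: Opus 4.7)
The plan is to argue by contradiction using Terracini's lemma, together with a semicontinuity argument that promotes the hypothesis from a single $k$-tuple to a general one.

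\medskip

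\emph{Step 1 (Semicontinuity of the contact locus).} For $(p_1,\ldots,p_k)\in X^k$, introduce the \emph{contact locus}
\[
\Gamma(p_1,\ldots,p_k) := \bigl\{ p\in X \;:\; {\mathbb T}_{p}X \subset \langle {\mathbb T}_{p_1}X,\ldots,{\mathbb T}_{p_k}X\rangle\bigr\}.
\]
The points $p_1,\ldots,p_k$ always lie in $\Gamma$, and both $\dim\Gamma$ and the length of $\Gamma$ (in the relevant range) are upper semicontinuous in $(p_1,\ldots,p_k)$. By hypothesis $\Gamma(x_1,\ldots,x_k)$ is the reduced set $\{x_1,\ldots,x_k\}$, so the same equality $\Gamma(p_1,\ldots,p_k) = \{p_1,\ldots,p_k\}$ holds on a non-empty Zariski open subset $U\subset X^k$. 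In particular, for $(p_1,\ldots,p_k)\in U$ the span $\langle {\mathbb T}_{p_i}X\rangle$ attains its maximum (expected) dimension, so $S_k(X)$ is non-defective and the addition map $\sigma_k\colon X^{(k)}\to S_k(X)$ is generically finite.

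\medskip

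\emph{Step 2 (Setup and Terracini).} Assume for contradiction that $X$ is not $k$-identifiable, so that through a general $z\in S_k(X)$ pass at least two distinct unordered decompositions
\[
z = p_1+\cdots+p_k = q_1+\cdots+q_k,\qquad \{p_i\}\neq \{q_j\}.
\]
As $z$ ranges over an open subset of $S_k(X)$, both ordered tuples $(p_1,\ldots,p_k)$ and $(q_1,\ldots,q_k)$ range over open subsets of $X^k$ (via local sections of $\sigma_k$, which is generically finite by Step~1); hence for $z$ general enough we may assume both tuples lie in the open set $U$ of Step~1. By Terracini's lemma applied to each of the two generic decompositions,
\[
\langle {\mathbb T}_{p_1}X,\ldots,{\mathbb T}_{p_k}X\rangle \;=\; {\mathbb T}_{z}S_k(X) \;=\; \langle {\mathbb T}_{q_1}X,\ldots,{\mathbb T}_{q_k}X\rangle.
\]

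\medskip

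\emph{Step 3 (Conclusion).} The previous equality yields ${\mathbb T}_{q_j}X\subset \langle {\mathbb T}_{p_i}X\rangle$ for every $j$, i.e.\ $q_j\in\Gamma(p_1,\ldots,p_k)$. Since $(p_1,\ldots,p_k)\in U$, Step~1 gives $\Gamma(p_1,\ldots,p_k)=\{p_1,\ldots,p_k\}$, hence $\{q_1,\ldots,q_k\}\subseteq\{p_1,\ldots,p_k\}$. As both sets have cardinality $k$ (for $z$ generic the points are distinct), they coincide, contradicting $\{p_i\}\neq\{q_j\}$.

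\medskip

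The main obstacle is the semicontinuity step, which has to propagate the hypothesis from the single tuple $(x_1,\ldots,x_k)$ to a Zariski-general one; the rest is standard Terracini. A minor subtlety is justifying that the \emph{second} decomposition $(q_1,\ldots,q_k)$ is itself generic enough to invoke Terracini, which follows from expressing it as a local algebraic section of the generically finite map $\sigma_k$ over a small open neighborhood in $S_k(X)$.
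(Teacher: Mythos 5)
The paper gives no proof of this statement at all --- it is quoted verbatim as Proposition 2.4 of \cite{CO} and described as ``essentially a consequence of Terracini Lemma'' --- and your argument (upper semicontinuity of the tangential contact locus $\Gamma$, then Terracini applied to two decompositions of a general point of $S_k(X)$) is precisely the standard proof given there, so the approach matches. One small simplification worth noting: for the second decomposition you only need the containment $\langle {\mathbb T}_{q_1}X,\ldots,{\mathbb T}_{q_k}X\rangle\subseteq {\mathbb T}_zS_k(X)$, which holds for \emph{any} decomposition of a smooth point of $S_k(X)$ with nonzero coefficients (the easy direction of Terracini, since the image of the differential of the addition map at such a point is exactly the span of the affine tangent spaces), so the genericity of $(q_1,\ldots,q_k)$ that you extract from the generic finiteness of $\sigma_k$ is not actually needed.
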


\section{The Main Lemma}\label{mainlem}

The inductive step, that allows us to provide effective results on the identifiability
of tensors, relies in the following:

\begin{lem}\label{ml} Let $X$ be a smooth non-degenerate projective subvariety 
of $\PP^N$, of dimension $n$. Let $Y$ denote the canonical Segre
embedding of $X\times\PP^m$ into $\PP^M$, $M=mN+m+N$.
Fix $k$ with $(n+1)k<N+1$ and $r<N$ such that
$r+1\geq (n+m+1)k$. 
Assume that a general linear 
subspace of $\PP^N$, of dimension $r$, which is 
tangent to $X$ at $k$ general points, is not tangent to $X$ elsewhere.

Then the general linear subspace of $\PP^M$, of dimension 
$mr+m+r$, which is tangent to $Y$ at $(m+1)k$ general points,
is not tangent to $Y$ elsewhere. 
\end{lem}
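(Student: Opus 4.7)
My plan is to construct an explicit pair consisting of a configuration of $(m+1)k$ points on $Y$ together with a linear subspace $\tilde L \subset \PP^M$ of dimension $mr+m+r$, such that $\tilde L$ is tangent to $Y$ exactly at those chosen points. Since ``the contact locus of $\tilde L$ equals the prescribed $(m+1)k$ points'' is a Zariski-open condition on the (irreducible) parameter space of such pairs, producing one such example will force the conclusion for a general configuration. I will write $\hat V$ for the affine cone over a projective linear subspace $V$.

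The construction will use the direct-sum decomposition $\CC^{M+1}=\CC^{N+1}\otimes \CC^{m+1}=\bigoplus_{j=0}^m \CC^{N+1}\otimes w_j$ induced by a basis $w_0,\ldots,w_m$ of $\CC^{m+1}$. For each $j=0,\ldots,m$ I choose $k$ distinct general points $x_{j,1},\ldots,x_{j,k}\in X$, with all the $x_{j,i}$ pairwise distinct across $(j,i)$, and place the $(m+1)k$ points on $Y$ as $y_{j,i}:=(x_{j,i},[w_j])$. The inequality $r+1\geq(n+m+1)k$ leaves at least $mk$ dimensions of freedom beyond the $(n+1)k$ needed for tangency at $k$ points, which I will exploit to produce, for each $j$, an $r$-dimensional linear subspace $L'_j\subset\PP^N$ satisfying: (a) $L'_j$ is tangent to $X$ at $x_{j,1},\ldots,x_{j,k}$; (b) $L'_j$ contains the $mk$ points $x_{j',i'}$ with $j'\neq j$; (c) $L'_j$ is tangent to $X$ nowhere else. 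Setting $\hat{\tilde L}:=\bigoplus_{j=0}^m \hat{L'_j}\otimes w_j$ then yields a subspace $\tilde L\subset\PP^M$ of projective dimension $(m+1)(r+1)-1=mr+m+r$, as required.

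Verifying that the contact locus of $\tilde L$ equals $\{y_{j,i}\}$ will be a direct computation in the direct-sum decomposition. Tangency at each $y_{j,i}$ is immediate: $\hat{\mathbb T}_{y_{j,i}}Y=\hat{\mathbb T}_{x_{j,i}}X\otimes w_j+x_{j,i}\otimes\CC^{m+1}$, and its first summand lies in $\hat{L'_j}\otimes w_j$ by (a), while its second summand decomposes as $\sum_{j'}x_{j,i}\otimes w_{j'}$ with each piece in $\hat{L'_{j'}}\otimes w_{j'}$ by (a) (when $j'=j$) or by (b) (when $j'\neq j$). Conversely, if $\tilde L$ is tangent at some $(x,[v])$ with $v=\sum_j \alpha_j w_j$, then reading $\hat{\mathbb T}_x X\otimes v+x\otimes\CC^{m+1}\subset\hat{\tilde L}$ component-wise in the $w_j$-decomposition forces $\hat{\mathbb T}_x X\subset\hat{L'_j}$ for every $j$ with $\alpha_j\neq 0$; by (c) this pins $x$ inside $\{x_{j,1},\ldots,x_{j,k}\}$ for each such $j$. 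Since these $k$-element subsets are pairwise disjoint, I conclude that $|\mathrm{supp}(v)|=1$, so $(x,[v])=(x_{j_0,\ell},[w_{j_0}])=y_{j_0,\ell}$ for some $j_0,\ell$.

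I expect the main difficulty to be producing $L'_j$ satisfying all three of (a), (b), (c) simultaneously: constraint (b) confines $L'_j$ to a proper closed subfamily of the Grassmannian $\mathcal G$ of $r$-planes tangent at $x_{j,\cdot}$, so (c) does not follow mechanically from the lemma's hypothesis. I plan to handle this via a standard incidence-variety argument: the hypothesis says that $\mathcal G$ contains a dense open subset $U$ on which (c) holds, and the forgetful map $\{(L,\{p_s\}_{s=1}^{mk}):L\in\mathcal G,\ \{p_s\}\subset L\}\to X^{mk}$ is dominant (any $L\in U$ contains many generic choices of $\{p_s\}$). A Bertini-type argument then shows that, for a general $\{p_s\}$, the corresponding fiber meets $U$; specializing $\{p_s\}=\{x_{j',i'}\}_{j'\neq j}$ supplies the desired $L'_j$ and completes the plan.
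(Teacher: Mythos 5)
Your construction is essentially identical to the paper's own proof: the authors fix $m+1$ independent points $p_0,\dots,p_m\in\PP^m$ (your basis $w_j$), take $k$ general points in each slice $X\times\{p_j\}$ (your $x_{j,i}$), build for each $h$ an $r$-plane $R_h$ tangent to $X\times\{p_h\}$ at its own $k$ points and passing through the images of the other $mk$ points (your $L'_j$ with properties (a) and (b)), take the span of the $R_h$ (your direct sum $\bigoplus_j \hat{L'_j}\otimes w_j$ -- the same thing, since the $R_h$ sit in the disjoint linear slices $\PP^N\otimes w_h$), and then verify the contact locus by reading the tangency condition slice by slice, exactly as in your component-wise computation with $v=\sum_j\alpha_j w_j$. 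Your final appeal to semicontinuity to pass from this special configuration to the general one is also the paper's closing step.

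The one place you go beyond the paper is property (c), which the paper dispatches with the single phrase ``by our assumption on $X$.'' You are right to flag that (c) is not automatic, since (b) confines $L'_j$ to a proper closed subfamily of the tangent $r$-planes; but the justification you sketch is not sound as stated. The parenthetical reason you give for dominance of the forgetful map -- that any $L\in U$ contains many generic choices of the $mk$ points -- is false in general: when $N-r>n$ a general $r$-plane tangent to $X$ at $k$ points need not meet $X$ anywhere else, so $\bigcup_{L\in U}(L\cap X)^{mk}$ can fail to dominate $X^{mk}$. Dominance of $\{(L,\{p_s\})\}\to X^{mk}$ does hold, but for the opposite reason (the fiber over any $mk$-tuple is a nonempty Grassmannian because $r+1\ge(n+1)k+mk$); and then surjectivity alone does not imply that the general fiber meets $U$ -- for that you need the incidence variety (over the locus where the span of the $k$ tangent spaces and the $mk$ points has expected dimension, where it is an irreducible Grassmannian bundle) to not be contained in the preimage of $\mathcal G\setminus U$, which is essentially the assertion being proved. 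So your Bertini step, as written, is circular at exactly the point where the paper is silent; if you want a complete argument here you must show directly that some $r$-plane through the full configuration has finite contact locus, e.g.\ by adding the $mk$ base points one at a time and controlling the dimension of the ``tangent elsewhere'' locus.
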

\begin{proof}
First of all, notice that $\dim(Y)=(m+n)$, and 
$(m+1)(r+1)\geq (m+n+1)(m+1)k$. 
Thus, by an obvious parameter count, there are linear subspaces 
of dimension $mr+m+r$ which
are tangent to $Y$ at $(m+1)k$ general points.

Fix $m+1$ independent points $p_0,\dots,p_m$ of $\PP^m$
and for $j=0,\dots,m$ take $k$ general points $q_{ij}$
of the fiber $X\times\{p_j\}$. Call $\pi_j$ the natural
projection of  $X\times\{p_j\}$ to $X$.

For $h=0,\dots,m$,
fix a general linear subspace $R_h$, of dimension $r$,
which is tangent to $X\times\{p_h\}$ at the $k$ points 
$q_{1h},\dots,q_{kh}$ and passes through the points $\pi_j(q_{ij})\times \{p_h\}$,
for $j\neq h$. Since $r+1\geq k(n+1)+km$, such spaces $R_h$ exist.
Moreover $R_h$ it is tangent to $X\times\{p_h\}$ only
at the point $q_{1h},\dots, q_{kh}$, by our assumption on $X$.

Let $R$ be the span of all the $R_h$'s. We claim
that $R$, which is a linear subspace of dimension $mr+m+r$,
is tangent to $Y$ at all the points $q_{ij}$, and it is not
tangent to $Y$ elsewhere. This will conclude the proof
of the lemma, by semicontinuity.

First notice that for all $i,j$, $R$ contains $m+1$ general points
of $\{\pi_j(q_{ij})\}\times\PP^m$, hence it contains these fibers.
Since $R$ also contains the tangent spaces to $X\times\{p_h\}$
at the points $q_{ih}$'s for all $h$, then it is tangent 
to $Y$ at all the points $q_{ij}$'s.

Assume now that there exists a point $x\in Y$, different from
the  $q_{ih}$'s, such that $R$ is tangent to $Y$ at $x$.
Call $x'$ the projection of $x$ to $\PP^m$, so that in 
some coordinate system, we can write $x'= a_0p_0+\dots+a_mp_m$.
There is at least one of the $a_i$'s, say $a_0$, which is 
non-zero. Assume that also $a_1\neq 0$. Then, the projection
of $R$ to  $\PP^N\times\{p_0\}$, which by construction coincides
with $R_0$, is also tangent to $X\times\{p_0\}$
at the projection of $q_{k1}$. By the generality of the 
choice of the $q_{ij}$'s, $q_{k1}$ cannot coincide with any of
the points $q_{10},\dots,q_{k0}$. Thus we get a contradiction.

So, we conclude that $a_1=0$. Similarly we get that $a_2=\dots=a_m=0$.
It follows that $x=x'$ belongs to $X\times\{p_0\}$ and 
since $R_0$ is tangent to $X\times\{p_0\}$ at $x$, then
$x$ must coincide with some point $q_{i0}$. 
\end{proof}

\begin{rem}\label{menok}
It is worthy of spending one Remark to point out that, by semicontinuity,
if a general linear subspace of $\PP^N$, of dimension $r$, which is 
tangent to $X$ at $k$ general points, is not tangent to $X$ elsewhere,
then the same phenomenon occurs for general linear subspaces of dimension
$r-1$, $r-2$, and so on.
\end{rem}

The Lemma, together with Theorem \ref{wd-id},
 produces the following general principle:
 
\begin{cor} \label{mlcor} 
With the same assumptions on $X$ of Lemma \ref{ml},
then $Y=X\times\PP^m$ is {$(m+1)k$-identifiable}. 
\end{cor}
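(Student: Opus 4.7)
The plan is to deduce the corollary by using Lemma \ref{ml} to verify the hypothesis of Proposition \ref{wd-id} for $Y$. Specifically, Proposition \ref{wd-id} requires exhibiting $(m+1)k$ particular points of $Y$ whose spanned tangent spaces contain no other tangent space of $Y$; Lemma \ref{ml} produces a linear subspace of $\PP^M$, tangent to $Y$ at $(m+1)k$ general points and nowhere else, which is the essential ingredient.

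First I would pick $r+1=(n+m+1)k$, the minimal value allowed by Lemma \ref{ml}. Note that the resulting dimension $mr+m+r=(m+1)(r+1)-1=(m+1)(n+m+1)k-1$ coincides with the expected dimension of the span of $(m+1)k$ general tangent spaces of $Y$, since $\dim Y=n+m$. Applying Lemma \ref{ml} to this choice furnishes $(m+1)k$ general points $q_{ij}\in Y$ together with a linear subspace $R\subseteq\PP^M$ of dimension $mr+m+r$ which is tangent to $Y$ at each $q_{ij}$ and not tangent at any other point.

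Next, I would consider the span $L:=\langle\mathbb{T}_{q_{ij}}Y\rangle$ of the tangent spaces at these points. By construction every $\mathbb{T}_{q_{ij}}Y$ sits in $R$, so $L\subseteq R$. If some $y\in Y$ distinct from all the $q_{ij}$'s satisfied $\mathbb{T}_yY\subseteq L$, then also $\mathbb{T}_yY\subseteq R$, contradicting the second conclusion of Lemma \ref{ml}. Hence $L$ contains $\mathbb{T}_yY$ only when $y$ is one of the $q_{ij}$'s, and Proposition \ref{wd-id} (applied to $Y$ with the $(m+1)k$ chosen points) yields that $Y$ is $(m+1)k$-identifiable.

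There is essentially no obstacle, since Lemma \ref{ml} has already absorbed the geometric work; the only point worth flagging is the passage from the auxiliary subspace $R$ of Lemma \ref{ml} to its subspace $L$, the genuine span of tangent spaces that Proposition \ref{wd-id} demands. This passage is immediate from the inclusion $L\subseteq R$: tangency of $L$ at an outside point would force tangency of $R$ at the same point.
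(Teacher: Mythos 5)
Your argument is correct and is exactly the route the paper intends: the paper gives no explicit proof beyond noting that Lemma \ref{ml} combined with Proposition \ref{wd-id} yields the corollary, and your write-up simply fills in the (easy) details — in particular the observation that the span of the tangent spaces at the $q_{ij}$'s is contained in the subspace $R$ produced by the Lemma, so non-tangency of $R$ elsewhere transfers to that span. No gaps; the choice of the minimal $r$ is unnecessary (any admissible $r$ works) but harmless in view of Remark \ref{menok}.
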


Thus we will prove the identifiability of Segre products,
starting with a $X$ who is a Segre product for which we know 
that the assumptions of Lemma \ref{ml} hold (by 
computer-aided specific computations or by Theorem \ref{stra} below)
and then extending the number of factors of $X$,
 and using {Lemma \ref{ml}}
inductively.

\section{Many copies of $\PP^1$}\label{piunos}

The main case in which the previous result applies is 
the Segre product of many projective lines.

\begin{prop} Let $X$ be the product of $n$ copies of $\PP^1$,
$6\le n\le 12$, naturally embedded in $\PP^{2^n-1}$. Then for 
$k<k_c=\frac{2^n}{n+1}$ the linear span of $k$ general tangent spaces at $X$,
is not tangent to $X$ elsewhere. In particular $X$ is $k$-identifiable 
for all $k<k_c$.
\end{prop}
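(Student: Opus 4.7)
The plan is to verify the statement by direct computation, using the weak-defectivity algorithm alluded to in the introduction. For each pair $(n,k)$ with $6\le n\le 12$ and $1\le k<\frac{2^n}{n+1}$, I would check numerically that the span of $k$ generic tangent spaces fails to contain any further tangent space. The identifiability assertion then follows immediately from Proposition \ref{wd-id}, since that proposition says exactly that the tangent-span condition at $k$ general points implies $k$-identifiability.

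The concrete computation exploits the explicit structure of the Segre embedding. Choose $k$ points $p_1,\dots,p_k\in X$ as $p_i=v^{(1)}_i\otimes\cdots\otimes v^{(n)}_i$ with random (rational or finite-precision) vectors $v^{(j)}_i\in\CC^2$. The affine tangent space $\widehat{\mathbb T}_{p_i}X\subset \CC^{2^n}$ is spanned by the $n+1$ explicit vectors $v^{(1)}_i\otimes\cdots\otimes v^{(n)}_i$ and $v^{(1)}_i\otimes\cdots\otimes w^{(j)}_i\otimes\cdots\otimes v^{(n)}_i$ for $j=1,\dots,n$, where $w^{(j)}_i$ is any complement of $v^{(j)}_i$ in $\CC^2$. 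Stack all of these into a $(n+1)k\times 2^n$ matrix $A$; the column span of $A^t$ is the affine cone over the span $T$ of the $k$ tangent spaces. Then pick one further random point $p\in X$, build the tangent space $\widehat{\mathbb T}_pX$ the same way, and verify numerically that appending those rows to $A$ strictly increases the rank. If this check succeeds for one choice of the random data then, by the openness of the maximal-rank locus, it succeeds on a Zariski-open set of $k$-tuples, which is exactly the tangent-span condition.

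The justification that ``not tangent at a general point'' is actually equivalent to ``not tangent anywhere else'' is the standard content of the theory of weakly defective varieties \cite{CC}: the locus where the span is tangent is a closed subvariety, and the algorithm tests its complement. Thus a single successful numerical check for generic inputs settles the statement of the proposition for that pair $(n,k)$. Since identifiability is monotone in $k$, the case $k=\lfloor k_c\rfloor$ yields the full range, but the proposition is phrased in terms of the tangent-span condition itself, so in principle each $k$ must be tested independently.

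The main obstacle is computational scale rather than conceptual. The worst case is $n=12$, where the ambient space has dimension $2^{12}=4096$ and $k$ can be as large as $\lfloor 4096/13\rfloor=315$, so one must decide the rank of a matrix with nearly $4096\cdot 316$ entries in characteristic zero. Using exact rational arithmetic guarantees correctness but is slow; finite-field reductions or floating-point SVD with verification are the customary shortcuts, and they are the reason the bound is drawn at $n=12$ rather than somewhere higher. The lower bound $n\ge 6$ is there to avoid the exceptional cases for small $n$ (notably the defective case of five copies of $\PP^1$ recalled in the introduction), which would otherwise make the tangent-span condition actually fail for some sub-critical $k$.
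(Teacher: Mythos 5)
Your plan has a genuine logical gap at its core. The check you propose --- that the tangent space at one further \emph{general} point $p$ is not contained in $T=\langle {\mathbb T}_{p_1}X,\dots,{\mathbb T}_{p_k}X\rangle$ --- only proves that the contact locus $C=\{p\in X : {\mathbb T}_pX\subset T\}$ is a \emph{proper} closed subvariety of $X$. But the proposition, and Proposition \ref{wd-id} which you need to invoke, require much more: that $C$ is \emph{exactly} $\{p_1,\dots,p_k\}$. Your appeal to weak defectivity does not bridge this: if $X$ were $k$-weakly defective, $C$ would typically be a positive-dimensional subvariety through the $p_i$'s (for Segre varieties this is exactly what happens, e.g., for $(\PP^1)^4$ with $k=3$ or $(\PP^2)^3$ with $k=4$, as recalled in the paper), and a general point of $X$ would still lie outside $C$, so your rank test would pass even though the conclusion fails. ``Not tangent at a general extra point'' is strictly weaker than ``not tangent elsewhere,'' and no semicontinuity or openness argument converts the former into the latter.

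The algorithm the paper actually uses (section \ref{algo}) is designed precisely to close this gap: it computes cartesian equations of $T$, takes all partial derivatives to obtain equations for the whole contact locus $C$, and then checks that the Jacobian of those equations at $p_1$ has rank equal to $\dim X$, i.e., that $C$ is a reduced isolated point at $p_1$. Combined with the theory of weakly defective varieties \cite{CC} (the general contact locus is either positive-dimensional at each point of tangency or consists of finitely many points, and then of exactly the chosen ones), this local computation at $p_1$ certifies the global statement. To repair your proposal you would have to replace the ``one extra random point'' test by this Jacobian test on the contact locus, or otherwise certify that $C$ is zero-dimensional at the $p_i$'s. Your remaining remarks (certifying generic ranks by a single exact computation, monotonicity in $k$, the exclusion of $n\le 5$) are fine, though the cutoff at $n=12$ in the paper is a memory issue handled by a divide-and-conquer implementation rather than a precision issue.
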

\begin{proof} Just a computer-aided computation, following 
the the algorithm presented in section \ref{algo}. In the case of $12$ copies,
the algorithm goes out of memory if implemented in a straightforward way. We
used a ``divide and conquer'' technique to save memory, running in 2 hours on
a PC with two processors at 2GHz.
\end{proof}

\begin{thm} For $n\geq 12$, let $X$ be the product of 
$n$ copies of $\PP^1$, naturally embedded in $\PP^N$,
with $N=2^n-1$. Then for $r< 2^n-2^{n-12}$ and for
{$k\leq\frac{r+1}{n+1}$}, a general linear
subspace of dimension $r$ which is tangent to $X$
at $k$ points, is not tangent to $X$ elsewhere. 
\end{thm}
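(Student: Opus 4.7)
The plan is to prove the theorem by induction on $n\geq 12$, with Lemma \ref{ml} (applied to $X=(\PP^1)^{n-1}$ with $m=1$) providing the inductive step and Remark \ref{menok} handling reductions in the ambient dimension $r$.

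For the base case $n=12$, the preceding proposition supplies the tangency condition for each $k\leq 315$ at the critical dimension $r=13k-1$. To extend this to the full range $r\in[13k-1,4094]$ demanded by the theorem, I would argue by a dimension count in the Grassmannian of $r$-dimensional subspaces containing the span $R'_k$ of the $k$ prescribed tangent spaces: that span is already not tangent elsewhere by the proposition, and for generic $y\in X\setminus\{x_1,\ldots,x_k\}$ one has $T_y\not\subset R'_k$, so the locus of $r$-dimensional extensions which happen to contain some further $T_y$ has positive codimension in the Grassmannian (the count is controlled by the inequality $r<N$), and a generic extension still fails to be tangent elsewhere.

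For the inductive step, assume the theorem at $n-1$ and fix a pair $(r,k)$ in the range at $n$. When $k$ is even, set $k_0=k/2$ and choose $r_0$ satisfying $(n+1)k_0\leq r_0+1$, $r_0<2^{n-1}-2^{n-13}$, and $2r_0+1\geq r$; the theorem's hypotheses $(n+1)k\leq r+1\leq 2^n-2^{n-12}$ translate into the compatibility of these three constraints on $r_0$. The induction hypothesis at $n-1$, applied to $(r_0,k_0)$ (whose requirement $nk_0\leq r_0+1$ is weaker than $(n+1)k_0\leq r_0+1$), verifies the premise of Lemma \ref{ml}, whose conclusion yields the tangency condition for $X_n$ at dimension $2r_0+1$ and $2k_0=k$ tangent points; Remark \ref{menok} then steps the dimension down from $2r_0+1$ to the desired $r$. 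The bound $r<2^n-2^{n-12}$ is preserved by the induction, since $r_0\leq 2^{n-1}-2^{n-13}-1$ gives $r\leq 2r_0+1\leq 2^n-2^{n-12}-1$.

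The principal obstacle is the case of odd $k$, because Lemma \ref{ml} with $m=1$ necessarily produces an even count $2k_0$ of tangent points, while Remark \ref{menok} only lowers $r$ and not $k$. My plan to handle this is to take $k_0=(k+1)/2$ and apply the scheme above to obtain a $(2r_0+1)$-dimensional subspace tangent at $2k_0=k+1$ general points and not tangent elsewhere, then specialize within the Grassmannian of $r$-dimensional subspaces containing the span of only $k$ of those tangent spaces: a generic such subspace, realized inside the lemma's output, avoids containing the $(k+1)$-st tangent space by a codimension-$\geq 1$ condition---this is where non-defectivity of the appropriate secant variety of $X_n$ enters---and by openness of the non-tangency condition the conclusion extends from this specialization to a generic $r$-dimensional subspace in the full Grassmannian. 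This specialization step, which is needed precisely in the extremal odd-$k$ cases close to $(n+1)k=r+1$, is the most delicate part of the proposed argument.
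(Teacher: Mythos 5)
Your proposal follows the paper's proof almost verbatim in structure: induction on $n$ from the computer-verified case $n=12$, with Lemma \ref{ml} applied with $m=1$ to $X'=(\PP^1)^{n-1}$ and $k'=\lceil k/2\rceil$, and Remark \ref{menok} used to move between values of $r$. The paper is in fact terser than you are: it reduces at once to the extremal dimension $r+1=2^n-2^{n-12}$, applies the Lemma to $k'=\lceil k/2\rceil$, and asserts that the claim ``follows directly,'' leaving implicit both of the points you single out (the base-case passage from the span, of dimension $13k-1$, to a general $r$-dimensional subspace, and the parity issue when $k$ is odd). So the two ``patches'' are your additions, not a different route.

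The concrete wrinkle is in your odd-$k$ patch. The induction hypothesis forces $r_0\le 2^{n-1}-2^{n-13}-1$, so the Lemma's output $L'$ has dimension $2r_0+1\le 2^n-2^{n-12}-1$, which is exactly the extremal value of $r$ that the induction reduces to. In that case the only $r$-dimensional subspace of $L'$ containing anything is $L'$ itself, and $L'$ does contain $T_{x_{k+1}}$; the ``generic $r$-dimensional subspace inside the lemma's output avoiding the $(k+1)$-st tangent space'' does not exist, so the specialization-plus-openness step degenerates precisely in the case you need it. To repair it you must run the codimension count directly in the full Grassmannian of $r$-planes containing $\langle T_{x_1},\dots,T_{x_k}\rangle$ (as in your base-case argument); there the count $(n+1)(N-r)>n$ disposes of a generic extra tangency point $y$, but the genuine difficulty is a possible positive-dimensional locus of special $y$ whose tangent space already meets $\langle T_{x_1},\dots,T_{x_k}\rangle$ in large dimension, where the codimension of the bad locus drops to as little as $N-r$ --- exactly the weak-defectivity phenomenon the whole method is built around, and not something a naive parameter count settles when $r$ is close to $N$. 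In fairness, the paper's own one-line treatment of odd $k$ (and of the base case for $r>13k-1$) leaves the same point implicit, so your attempt is, if anything, more explicit about where the argument is fragile.
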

\begin{proof} The proof goes by induction on $n\geq 12$.
If $n=12$, the claim follows from the previous Proposition
and Remark \ref{menok}.

Assume the claim holds for $n-1$. Again by 
Remark \ref{menok}, it suffices to prove the claim
for $r+1=2^n-2^{n-12}$. Fix $k$ as above. Notice that
$k':=\lceil k/2\rceil$ is at most $(2^{n-1}-2^{n-13})/(n+1)+1$,
which is smaller than $(2^{n-1}-2^{n-13})/n$ for $n\geq 12$.
Thus we may apply induction: the general linear subspace of
$P^{N'}$, $N':=2^{n-1}-1$, of dimension $(2^{n-1}-2^{n-13})-1$,
which is tangent to $X':=(\PP^1)^{n-1}$ at $k'$ points, is not
tangent to $X'$ elsewhere. The claim now follows directly
from the Main Lemma \ref{ml}.
\end{proof}

\begin{rem} The assumption $n\ge 6$ is motivated by the fact that
for $5$ copies of $\PP^1$, $X$ is $k$-identifiable if and only if 
$k\le 4$, while the general tensor
of rank $5$ has exactly two decompositions \cite{BC12}.

We recall that for $4$ copies of $\PP^1$, 
$X$ is $k$-identifiable if and only if 
$k\le 2$, while it is a result of Strassen that the general tensor
of rank $3$ has infinitely many decompositions.

For $3$ copies of $\PP^1$, 
$X$ is $k$-identifiable if and only if 
$k\le 2$, $2$ being the general rank.
\end{rem}

As a corollary, we get

\begin{thm} \label{manyp1}For $n\geq 12$, let $X$ be the product of 
$n$ copies of $\PP^1$, naturally embedded in $\PP^N$,
with $N=2^n-1$. Then for 
$k\leq (2^n-2^{n-12})/(n+1)$, $X$ is $k$-identifiable.
\end{thm}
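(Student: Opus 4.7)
The plan is to derive the identifiability statement as an essentially immediate consequence of the preceding theorem (the tangency result for $n$ copies of $\PP^1$) combined with Proposition \ref{wd-id}. The preceding theorem provides a non-tangency property for general linear subspaces of a carefully chosen dimension $r$, while Proposition \ref{wd-id} converts exactly that kind of non-tangency, applied to the span of $k$ tangent spaces, into $k$-identifiability. The whole task is to line up the numerical parameters, so the proof is really one or two lines plus a sanity check.

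Given $k\leq (2^n-2^{n-12})/(n+1)$, I would set $r:=k(n+1)-1$. Then $r+1=k(n+1)$, so the hypothesis $k\leq (r+1)/(n+1)$ is satisfied as an equality, and the bound on $k$ gives $r\leq 2^n-2^{n-12}-1<2^n-2^{n-12}$, so the remaining inequality of the preceding theorem also holds. That theorem therefore applies: a general linear subspace of $\PP^N$ of dimension $r$ which is tangent to $X$ at $k$ general points is not tangent to $X$ at any other point.

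Next, since $X$ has dimension $n$, the span $\langle \mathbb{T}_{x_1}X,\ldots,\mathbb{T}_{x_k}X\rangle$ of the tangent spaces to $X$ at $k$ general points has dimension at most $k(n+1)-1=r$; by the generality of the $x_i$ (and the fact that $r<N$), this maximum is attained. Hence this span is itself a general linear subspace of dimension $r$ tangent to $X$ at $x_1,\ldots,x_k$. By the previous paragraph, it contains $\mathbb{T}_xX$ only when $x\in\{x_1,\ldots,x_k\}$, and Proposition \ref{wd-id} then yields the asserted $k$-identifiability of $X$.

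The only step worth watching is the bookkeeping in the second paragraph: one must verify that the choice $r=k(n+1)-1$ simultaneously respects the upper bound $r<2^n-2^{n-12}$ and the lower bound $r+1\geq k(n+1)$, and one must observe that at a general choice of $x_i$ the span of their tangent spaces attains the maximal dimension $r$, so that it legitimately qualifies as a ``general'' $r$-dimensional subspace tangent to $X$ at these points. Neither point is substantive; all the real content sits in the preceding theorem, which is why the statement is introduced as a corollary.
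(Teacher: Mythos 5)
Your proposal is correct and follows exactly the route the paper intends: the statement is presented there simply as a corollary of the preceding tangency theorem via Proposition \ref{wd-id}, with precisely the parameter choice $r=k(n+1)-1$ that you spell out. The only detail you add beyond the paper is the check that the span of the $k$ general tangent spaces attains dimension $r$ (so that it qualifies as the general such subspace); this is fine, and in any case the conclusion would follow even without it, since every $r$-dimensional subspace tangent at the $k$ points contains that span, so non-tangency of the general one forces non-tangency of the span.
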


Let us compare the result with the (best known) bound 
on the identifiability of $\PP^1\times\dots\times\PP^1$
given in \cite{BC12}, which makes a fundamental use of the
main result in \cite{CGG}.

The main result of \cite{BC12} proves that $X$ is $k$-identifiable
for $k< 2^{n-1}/n$, which is a little better than half way from 
the critical (maximal) value $k_c$.

The previous result shows that $X$ is $k$-identifiable for 
$$k\leq \frac{2^n}{n+1}(1-\frac 1{2^{12}}) = \frac{4095}{4096}\ 
\frac{2^n}{n+1} = {0,9997\ldots} \cdot k_c $$
a sensible improvement, as $n$ grows.

\section{Many copies of $\PP^2$ and $\PP^3$}\label{pidues}

Let us see what happens with
the Segre product of many projective planes.

\begin{prop} Let $X$ be the product of $n$ copies of $\PP^2$,
$4\le n\le 6$, naturally embedded in $\PP^{3^n-1}$. Then for 
$k<k_c=\frac{2^n}{n+1}$ the linear span of $k$ general tangent spaces at $X$,
is not tangent to $X$ elsewhere. In particular $X$ is $k$-identifiable 
for all $k<k_c$.
\end{prop}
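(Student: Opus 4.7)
The natural plan is to replicate the strategy used in the preceding section for many copies of $\PP^1$: verify the tangency-nonextension condition case-by-case, for $n\in\{4,5,6\}$ and every admissible $k$, by applying the algorithm developed in Section \ref{algo}. The statement is a purely computational verification in a small finite list of cases; no induction or application of Lemma \ref{ml} is needed at this stage, which is the base case from which larger $n$ would later be handled inductively.

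First, I would enumerate the finite list of cases. With the stated $k_c=2^n/(n+1)$, the integer values of $k$ with $k<k_c$ are $k\in\{1,2,3\}$ for $n=4$ (where $k_c=16/5$), $k\in\{1,\ldots,5\}$ for $n=5$ (where $k_c=32/6$), and $k\in\{1,\ldots,9\}$ for $n=6$ (where $k_c=64/7$). In every one of these cases, $X=(\PP^2)^n\subset\PP^{3^n-1}$ satisfies $\dim X=2n$, so the expected projective dimension of the span of $k$ general tangent spaces is $k(2n+1)-1$, which stays comfortably below $3^n-1$ (the worst ratio occurs for $n=6,k=9$, giving $116\ll 728$). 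This ensures the tangency-nonextension question is nontrivial but not vacuous.

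For each $(n,k)$ on the list, I would choose $k$ pseudo-random points $x_1,\ldots,x_k\in X$ over a suitably large finite field, compute the linear span $L=\langle\mathbb{T}_{x_1}X,\ldots,\mathbb{T}_{x_k}X\rangle$, and then, via a rank computation on the matrix whose kernel parametrizes tangent spaces contained in $L$, determine the locus $\{y\in X:\mathbb{T}_yX\subseteq L\}$. If for each $(n,k)$ this locus turns out to equal exactly $\{x_1,\ldots,x_k\}$, then by semicontinuity (Remark \ref{menok}) the same holds at a general $k$-tuple, proving the desired tangency property. The $k$-identifiability assertion then follows immediately from Proposition \ref{wd-id}.

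The main obstacle is strictly computational: for $n=6$ one works in $\PP^{728}$, and the rank check requires handling matrices whose column count scales with the embedding dimension. For $n=4,5$ the computation is straightforward on a standard workstation. For $n=6$, as the authors already warned in the analogous $\PP^1$ case at the boundary of feasibility, a divide-and-conquer or memory-economical implementation of the span-and-rank step may be required; fortunately $3^6=729$ is still substantially smaller than the $2^{12}=4096$ case handled in the previous section, so the computation should remain tractable without requiring the full machinery deployed there.
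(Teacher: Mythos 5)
Your approach is exactly the paper's: the authors' entire proof is ``a computer-aided computation, following the algorithm presented in section \ref{algo}'', and your span-and-rank verification over a finite field, combined with semicontinuity and Proposition \ref{wd-id}, is precisely that algorithm.

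There is, however, one substantive problem with your case enumeration. The value $k_c=\frac{2^n}{n+1}$ in the statement is a copy-paste slip from the $\PP^1$ section; for $(\PP^2)^n$ the critical value defined in the introduction is $k_c=\frac{\prod(a_i+1)}{1+\sum a_i}=\frac{3^n}{2n+1}$, i.e.\ $k\le 8$ for $n=4$, $k\le 22$ for $n=5$ and $k\le 56$ for $n=6$. You can see that the larger range is what is actually needed: the base case $n=6$ of Theorem \ref{manyp2} invokes this proposition for $k$ up to $(3^6-3^0)/13=56$, so verifying only $k\in\{1,\ldots,9\}$ for $n=6$ (as your reading of the literal statement would have it) leaves the induction in the next theorem without a valid starting point. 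Note also that by Remark \ref{menok} (and by the fact that the span of $k-1$ general tangent spaces sits inside the span of $k$) it suffices to run the computation at the single top value of $k$ for each $n$, rather than for every admissible $k$; but that top value is $56$, not $9$, so the $n=6$ computation is considerably heavier than your estimate suggests and is in fact comparable to the twelve-factor $\PP^1$ case where the authors needed a memory-economical implementation.
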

\begin{proof} Just a computer-aided computation, following 
the the algorithm presented in section \ref{algo}.
\end{proof}

\begin{rem}
The assumption $n\ge 4$ is motivated by the fact that
for $3$ copies of $\PP^2$, $X$ is $k$-identifiable if and only if 
$k\le 3$, while it is a result of Strassen {(\cite{Str} \S 4)} that the general tensor
of rank $4$ has infinitely many decompositions.
\end{rem}

\begin{thm} \label{manyp2}For $n\geq 6$, let $X$ be the product of 
$n$ copies of $\PP^2$, naturally embedded in $\PP^N$,
with $N=3^n-1$. Then for $r< 3^n-3^{n-6}$ and for
$k\leq (r+1)/(2n+1)$, a general linear
subspace of dimension $r$ which is tangent to $X$
at $k$ points, is not tangent to $X$ elsewhere. 
\end{thm}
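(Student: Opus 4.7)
The plan is induction on $n \geq 6$, following the template used for Theorem \ref{manyp1} in the $\PP^1$ case.

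For the base case $n=6$, the preceding Proposition (applicable for $4 \leq n \leq 6$, via the computer-aided algorithm of Section \ref{algo}) supplies the tangency property for each $k < k_c = 3^n/(2n+1)$ at the tight linear span of $k$ general tangent spaces, and Remark \ref{menok} then extends the conclusion to the full admissible range of $r$ with $r < 3^6 - 3^0 = 728$.

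For the inductive step, assuming the claim for $n-1$, I would use Remark \ref{menok} to reduce to the extremal case $r+1 = 3^n - 3^{n-6}$. Setting $k' := \lceil k/3 \rceil$, I would verify the chain
\[ k' \;\leq\; \frac{3^{n-1} - 3^{n-7}}{2n+1} + 1 \;\leq\; \frac{3^{n-1} - 3^{n-7}}{2n-1}, \]
where the second inequality follows from $2(3^{n-1} - 3^{n-7}) \geq (2n-1)(2n+1) = 4n^2 - 1$, which holds comfortably for $n \geq 7$ because $2 \cdot 3^{n-7}(3^6 - 1) = 1456 \cdot 3^{n-7}$ dominates the polynomial $4n^2-1$ by a wide margin. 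This places $k'$ within the range allowed by the inductive hypothesis applied to $X' = (\PP^2)^{n-1} \subset \PP^{N'}$, with $N' = 3^{n-1}-1$, at $r' = 3^{n-1} - 3^{n-7} - 1$, so a general $r'$-dimensional subspace of $\PP^{N'}$ tangent to $X'$ at $k'$ general points is not tangent to $X'$ elsewhere. Applying the Main Lemma \ref{ml} with $m=2$ then lifts this to the analogous property for $Y = X' \times \PP^2 = X \subset \PP^N$ at subspace dimension $mr' + m + r' = 3r' + 2 = r$ and at $(m+1)k' = 3k' \geq k$ general tangent points, which (combined with the reduction in Remark \ref{menok}) yields the claim.

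The main delicate point, exactly as in the $\PP^1$ proof, is the numerical bookkeeping at the upper end of the admissible $k$-range, where the ceiling in $k' = \lceil k/3 \rceil$ must be checked against both the inductive hypothesis's range $k' \leq (r'+1)/(2n-1)$ and the Main Lemma's parameter constraint $r'+1 \geq (2n+1)k'$; the factor $(2n+1)$ (rather than $n+1$ as in the $\PP^1$ case) slightly tightens the margin, but the exponential growth of $3^{n-1} - 3^{n-7}$ against the linear $2n \pm 1$ leaves ample room.
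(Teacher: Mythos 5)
Your argument coincides with the paper's own proof: the same base case $n=6$ via the computer-checked Proposition together with Remark \ref{menok}, the same reduction to the extremal value $r+1=3^n-3^{n-6}$, the same estimate $k'=\lceil k/3\rceil\le (3^{n-1}-3^{n-7})/(2n+1)+1<(3^{n-1}-3^{n-7})/(2n-1)$, and the same final application of Lemma \ref{ml} with $m=2$. You have merely spelled out the numerical verification a bit more explicitly than the paper does.
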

\begin{proof} The proof goes by induction on $n\geq 6$.
If $n=6$, the claim follows from the previous proposition and
Remark \ref{menok}.

Assume $n\geq 7$ and the claim holds for $n-1$. Again by 
Remark \ref{menok}, it suffices to prove the claim
for $r+1=3^n-3^{n-6}$. Fix $k$ as above. Notice that
$k':=\lceil k/3\rceil$ is at most $(3^{n-1}-3^{n-7})/(2n+1)+1$,
which is smaller than $(3^{n-1}-3^{n-7})/(2n-1)$ for $n\geq 5$.
Thus we may apply induction: the general linear subspace of
$P^{N'}$, $N':=3^{n-1}-1$, of dimension $(3^{n-1}-3^{n-7})-1$,
which is tangent to $X':=(\PP^2)^{n-1}$ at $k'$ points, is not
tangent to $X'$ elsewhere. The claim now follows directly
from the Main Lemma \ref{ml}.
\end{proof}

As a corollary, we get

\begin{thm} For $n\geq 6$, let $X$ be the product of 
$n$ copies of $\PP^2$, naturally embedded in $\PP^N$,
with $N=3^n-1$. Then for 
$k\leq (3^n-3^{n-6})/(2n+1)$, $X$ is $k$-identifiable.
\end{thm}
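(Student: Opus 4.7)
The plan is to derive this result immediately from Theorem \ref{manyp2}, using Proposition \ref{wd-id} as the bridge from the tangency statement to $k$-identifiability. This parallels the (unstated) proof of the analogous Theorem \ref{manyp1} for the $\PP^1$ case.

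The first step is to pick the right dimension $r$. Since $\dim X = 2n$, each projective tangent space ${\mathbb T}_xX$ has dimension $2n$, and so the span of $k$ tangent spaces at general points of $X$ has projective dimension at most $r := (2n+1)k - 1$. With this choice, the hypothesis $k \leq (r+1)/(2n+1)$ of Theorem \ref{manyp2} is satisfied with equality, and the hypothesis $r < 3^n - 3^{n-6}$ translates exactly to the assumed bound $k \leq (3^n - 3^{n-6})/(2n+1)$. One should also note $k(2n+1) \leq 3^n - 3^{n-6} < 3^n = N+1$, so there is room for the span in $\PP^N$.

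The second step is to apply the theorem to the specific subspace of interest. Fix $k$ general points $x_1, \ldots, x_k \in X$ and consider $L := \langle {\mathbb T}_{x_1}X, \ldots, {\mathbb T}_{x_k}X \rangle$. This $L$ is a linear subspace of dimension at most $r$ that is tangent to $X$ at each $x_i$. In the non-defective case where $\dim L = r$, the subspace $L$ is the unique $r$-dimensional subspace tangent at the $k$ points and is thus a generic such subspace; in the defective case where $\dim L < r$, Remark \ref{menok} applied iteratively to the dimensions $r-1, r-2, \ldots$ propagates the conclusion of Theorem \ref{manyp2} downward, so that a general subspace of dimension $\dim L$ tangent at $k$ general points is still not tangent elsewhere. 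In either case, one concludes that $L$ is not tangent to $X$ at any point outside $\{x_1, \ldots, x_k\}$.

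The final step is to invoke Proposition \ref{wd-id}: the condition just established on $L$ is precisely its hypothesis for the points $x_1, \ldots, x_k$, hence $X$ is $k$-identifiable. I do not expect any serious obstacle: the only point that requires a bit of care is the defective case, which is handled automatically by the downward-propagation in Remark \ref{menok}, so no ideas beyond those already developed in Section \ref{mainlem} and Theorem \ref{manyp2} are needed.
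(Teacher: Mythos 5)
Your proposal is correct and follows essentially the same route as the paper, which states this result as an immediate corollary of Theorem \ref{manyp2} via Proposition \ref{wd-id} (exactly as it does explicitly for part (iii) of Theorem \ref{manyp3}). Your care with the defective case is sound, though it can be dispatched even more directly: any extra tangency point of the span $L$ would persist for every $r$-dimensional subspace containing $L$, in particular for the general one, contradicting Theorem \ref{manyp2}.
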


The previous result shows that $X$ is $k$-identifiable for 
$$k\leq  \frac{3^n}{2n+1}(1-\frac 1{3^6}) = \frac{728}{729}\ \frac{3^n}{2n+1},$$
i.e. up to $728/729=0.998\ldots$ of the critical (maximal) value $k_c$.
\smallskip

And now the reader can see how the trick goes, at least for cubic tensors.
Once one determines a starting point,  for few copies of given projective spaces
(e.g. by using a computer-aided computation),
then the Main Lemma \ref{ml} provides an extension to the
product of an arbitrary number of copies of projective spaces, in which
the bound is expressed as a constant fraction of the critical value $k_c$.
\smallskip

We end the list of particular cases with the product of many copies
of $\PP^3$, which is relevant because of its connection with the 
Algebraic Statistics of DNA chains.

 \begin{thm} \label{manyp3} 
Let $X$ be the product of 
$n\geq 5$ copies of $\PP^3$, naturally embedded in $\PP^N$,
with $N=4^n-1$. 

(i) for $n=5$,  a general linear subspace of dimension $r=1007$ 
 which is tangent to $X$
at $k\leq 63$ points, is not tangent to $X$ elsewhere. 

(ii) For $n>5$ and $k\leq (4^n-4^{n-3})/(3n+1)$,
a general linear subspace of dimension $r=4^n-4^{n-3}-1$ 
 which is tangent to $X$ at $k$ points, is not tangent to $X$ elsewhere. 
 
(iii) For  $k\leq (4^n-4^{n-3})/(3n+1)$, then  $X$ is $k$-identifiable.
 In other words, $X$ is $k$-identifiable up to 
$63/64={0.98\ldots}$ of the critical (maximal) value $k_c$.
\end{thm}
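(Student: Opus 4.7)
The plan is to follow the same three-step template used for Theorems \ref{manyp1} and \ref{manyp2}, replacing the base projective space by $\PP^3$ and fixing the auxiliary exponent at $3$ rather than $12$ or $6$. The three assertions correspond naturally to the three steps.

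For part (i), the approach is a direct computer-aided verification in the spirit of the Propositions opening Sections \ref{piunos} and \ref{pidues}. One feeds the algorithm of Section \ref{algo} with $n=5$ copies of $\PP^3$, target subspace dimension $r=1007$ (note $1008 = 4^5-4^2$), and number of tangent points up to $k=63=k_c-1$, where $k_c=4^5/16=64$, and checks that no additional tangency appears. Since the Segre variety sits in $\PP^{1023}$ and has dimension $15$, a ``divide and conquer'' implementation similar to the one used for $12$ copies of $\PP^1$ may be needed to control memory; but the conceptual content is just an instance of Proposition \ref{wd-id}.

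For part (ii), the approach is induction on $n \ge 5$. The base case $n=5$ is part (i), combined with Remark \ref{menok} for subspaces of dimension $<1007$. For the inductive step $n-1 \to n$ (with $n \ge 6$), apply the Main Lemma \ref{ml} to $X' = (\PP^3)^{n-1}$ with $m = 3$, taking $r'+1 = 4^{n-1}-4^{n-4}$ and $k' := \lceil k/4 \rceil$. The induction hypothesis applied to $(X',k',r')$ yields a general subspace of $\PP^{4^{n-1}-1}$ of dimension $r'$, tangent to $X'$ at $k'$ general points, not tangent elsewhere; then Lemma \ref{ml} produces a general subspace of dimension $mr'+m+r' = 4(r'+1)-1 = 4^n-4^{n-3}-1 = r$ tangent to $X = X' \times \PP^3$ at $(m+1)k' = 4k' \ge k$ points and nowhere else, and semicontinuity recovers the statement at exactly $k$ tangent points. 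Two small numerical checks are required: first, that $k' \le (4^{n-1}-4^{n-4})/(3n-2)$ so the induction hypothesis applies; second, that the dimension assumption of Lemma \ref{ml}, namely $r'+1 \ge (3n+1)k'$, holds. Both reduce to the elementary inequality $(3n-2)(3n+1) \le 3(4^{n-1}-4^{n-4})$, easily verified at $n=6$ and dominated by exponential growth thereafter.

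Part (iii) is then an immediate consequence of part (ii) together with Proposition \ref{wd-id} (equivalently, Corollary \ref{mlcor}): the non-tangency condition is exactly what is required to conclude $k$-identifiability of $X$ in the stated range, giving the $63/64 \approx 0.98$ fraction of the critical value $k_c$. The main obstacle is part (i): unlike the clean inductive machinery of (ii) and the tautological deduction of (iii), it requires a genuine and nontrivial computation on a large Segre variety and may demand careful algorithmic engineering; the ``$+1$'' absorption arising from the ceiling $\lceil k/4 \rceil$ in the inductive step is a secondary minor point, resolved by the elementary estimate above.
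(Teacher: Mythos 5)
Your proposal follows exactly the paper's own proof, which is literally one line per part: (i) is a computer-aided computation via the algorithm of Section~\ref{algo}, (ii) follows from (i) and the inductive Lemma~\ref{ml} with $m=3$, and (iii) follows from (ii) and Proposition~\ref{wd-id}. You in fact supply considerably more detail than the paper does, and the structure (base case $n=5$, $k'=\lceil k/4\rceil$, $r'+1=4^{n-1}-4^{n-4}$, conclusion via semicontinuity) is the intended one.

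One caveat on the detail you added: your assertion that \emph{both} numerical checks reduce to $(3n-2)(3n+1)\le 3(4^{n-1}-4^{n-4})$ is not accurate. That inequality handles the first check (applicability of the induction hypothesis, i.e.\ $k'\le (4^{n-1}-4^{n-4})/(3n-2)$), but the hypothesis of Lemma~\ref{ml} that you also need, namely $r'+1\ge (3n+1)k'$, can fail at the very top of the stated range because of the ceiling: e.g.\ for $n=7$ one has $k\le 733$, hence $k'=184$ and $(3n+1)k'=22\cdot 184=4048>4032=r'+1$. The same rounding issue is present, silently, in the paper's proofs of Theorems~\ref{manyp1} and~\ref{manyp2} (which only verify the first check), so you have faithfully reproduced the argument including its one soft spot; repairing it requires either discarding a handful of top values of $k$ (harmless for the asymptotic constant $63/64$) or a mild strengthening of Lemma~\ref{ml} allowing unequal numbers of points on the fibers.
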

\begin{proof} (i) follows from a computer-aided computation, following 
the the algorithm presented in section \ref{algo}.
(ii) is a consequence of (i) and the inductive Lemma \ref{ml}.
(iii) follows from (ii) and Theorem \ref{wd-id}.
\end{proof}

\section{Products of three projective spaces}\label{tre}

For the general case, in which we have projective spaces of arbitrary dimension,
in order to produce examples 
similar to the ones of the previous section,
we need a starting point for the induction.

We obtain a starting point, for the case of the product of three
projective spaces $X=\PP^a\times\PP^b\times\PP^c$, $2<a\leq b \leq c$,
 from the following Theorem, 
which is due to Strassen in the case
$c$ odd (see \cite{Str}, Corollary 3.7), 
and we generalize to any $c$.

Our proof is apparently independent from the argument given by Strassen.
Indeed, following correctly the details of the steps, 
one realizes that the two arguments are essentially equivalent.

\begin{thm}\label{stra} Let $X$ be the product of three projective spaces  
$X=\PP^a\times\PP^b\times\PP^c$, $2<a\leq b \leq c$,
naturally embedded in $\PP^N$, with $N=(a+1)(b+1)(c+1)-1$. Then 
a general linear subspace $L$ of codimension $a+b+2$ in $\PP^N$, that
contains the span 
of the tangent spaces to $X$ at $k$ general points, with:
$$k{\le}\frac {(a+1)(b+1)(c+1)}{a+b+c+1}-c-1,$$
is not tangent to $X$ elsewhere.
\end{thm}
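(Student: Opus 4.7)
The plan is to translate the tangency hypothesis on $L$ into a statement about a linear family of matrices. Via the flattening
$$\CC^{a+1}\otimes\CC^{b+1}\otimes\CC^{c+1}\cong\mathrm{Hom}\bigl((\CC^{c+1})^{*},\CC^{a+1}\otimes\CC^{b+1}\bigr),$$
a tensor is encoded as an assignment $z\mapsto M(z)$ of $(a+1)\times(b+1)$ matrices to vectors $z\in\CC^{c+1}$. The affine tangent space to $X$ at $[x\otimes y\otimes z]$ is $\CC^{a+1}\otimes y\otimes z+x\otimes\CC^{b+1}\otimes z+x\otimes y\otimes\CC^{c+1}$, and the codimension $(a+b+2)$ subspace $L\subset\PP^N$ corresponds dually to an $(a+b+2)$-dimensional subspace $W$ of the dual tensor space. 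The tangency of $L$ at the $k$ general points $p_i=[x_i\otimes y_i\otimes z_i]$ becomes the requirement that every $\omega\in W$ annihilates each of the three pieces of $T_{p_i}X$, which in the matrix language translates into vanishing and kernel conditions on the slices $M_\omega(z_i)$ and their first-order variations.

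The argument then proceeds by contradiction. Assuming that $L$ is also tangent at a point $p=[x\otimes y\otimes z]$ distinct from the $p_i$, every $\omega\in W$ specialized at $z$ must produce a matrix having $x$ in its left kernel and $y$ in its right kernel, with matching derivative constraints in the $z$-direction. Together with the tangencies already imposed at the $p_i$, this would overdetermine $W$ provided the numerical balance
$$k\leq\frac{(a+1)(b+1)(c+1)}{a+b+c+1}-c-1=\frac{(c+1)(ab-c)}{a+b+c+1}$$
leaves exactly the parameter slack that $\dim W=a+b+2$ consumes; verifying this dimension balance precisely, and ensuring that the derivative conditions at $p$ do cut out the codimension one expects, is the first concrete task.

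The main obstacle is making the rank-drop count work uniformly in $c$. Strassen's original proof covers $c$ odd by invoking the fact that odd-order skew-symmetric matrices are automatically singular, which supplies a Pfaffian relation essentially for free; this shortcut collapses when $c$ is even. I would replace it by restricting the family $\{M_\omega\}_{\omega\in W}$ to a generic pencil $\PP^{1}\subset\PP^{c}$ and applying the Kronecker normal form for $(a+1)\times(b+1)$ matrix pencils to classify the rank-drop locus insensitively to the parity of $c$. An incidence-dimension count on the resulting rank stratification, together with a semicontinuity argument to return from the chosen specialization of the $p_i$ to the generic configuration, then specializes back to the required tangency conclusion and completes the proof.
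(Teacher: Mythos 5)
Your proposal is a plan, not a proof, and the two steps on which everything hinges are precisely the ones you defer. First, the dimension count: you say that tangency at an extra point $p$ ``would overdetermine $W$ provided the numerical balance \dots leaves exactly the parameter slack that $\dim W=a+b+2$ consumes,'' and then label verifying this as ``the first concrete task.'' That verification is the theorem; without it nothing has been shown. Second, and more seriously, the even-$c$ case --- which is exactly the case the paper adds to Strassen's result --- is handled by the assertion that restricting $\{M_\omega\}_{\omega\in W}$ to a generic pencil and invoking the Kronecker normal form will ``classify the rank-drop locus insensitively to the parity of $c$.'' This is not an argument. The matrices here are rectangular of size $(a+1)\times(b+1)$, you would need to determine which Kronecker types actually occur for pencils inside a $W$ constrained by $k$ general tangencies, show that an extra tangency forces a type that cannot occur for general $p_i$, and then justify the ``semicontinuity argument to return \dots to the generic configuration'' (semicontinuity goes the wrong way if you specialize the $p_i$ rather than $L$). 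None of this is carried out, and it is far from routine: the parity issue is the genuine obstacle in Strassen's approach, not a bookkeeping detail.

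For comparison, the paper sidesteps the pencil analysis entirely. It splits $\CC^{c+1}=\langle v_0\rangle\oplus C'$, i.e.\ $X$ into $X'=\PP^a\times\PP^b\times\PP^{c-1}$ and $X''=\PP^a\times\PP^b$, and constructs a \emph{specific} $L$ of codimension $a+b+2$ as the span of a hyperplane $L'$ tangent to $X'$ at the $k$ points $P_i$ and a codimension-$(a+b+1)$ subspace $L''$ through the projected points $Q_i\in X''$. The generalized trisecant lemma (Theorem 2.6 of \cite{CC1}) guarantees $L''$ meets $X''$ only at the $Q_i$ and is not tangent there; an extra tangency of $L$ at some $P$ would force $L$ to contain the $\PP^a$- and $\PP^b$-fibers along the line joining $P$ to some $P_i$, making $L''$ tangent to $X''$ at $Q_i$, a contradiction. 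Semicontinuity then transfers the conclusion to the general $L$. The numerical hypothesis on $k$ enters only to ensure $L'$ and $L''$ exist, which is a two-line check. If you want to salvage your route, you must actually perform the Kronecker-type classification and the incidence count; as written, the proof has a gap at its center.
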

\begin{proof}
Let $\PP^c=\PP(C)$, where $C$ is a vector space of dimension $c+1$.
Fix one vector $v_0\in C$ and split $C$ in a direct sum
$C= \langle v_0\rangle \oplus C'$, where $C'$ is a supplementary subspace
of dimension $c$. From the geometric point of view, this is equivalent
to split the product $X$ in two products
$$X'=\PP^a\times\PP^b\times\PP^{c-1} \mbox{ and } X''=\PP^a\times\PP^b\times\{P_0\}
=\PP^a\times\PP^b.$$
Fix general points $P_1,\dots, P_k\in X'$, with $P_i=v_i\otimes
w_i\otimes u_i$ and let $Q_1,\dots, Q_k$, $Q_i=v_i\otimes w_i$,
be the corresponding points of $X''$. The linear span  of
the $Q_i$'s is a space of dimension $k-1$ in $\PP^{N''}$,
where $N'' = ab+a+b$. 

By assumption $k-1{\le} N''-\dim(X'')=N''-a-b$.
Indeed if $c+1\geq a+b$ then
$$ k-1\leq \frac {(a+1)(b+1)(c+1)}{a+b+c+1}-a-b-1\leq (a+1)(b+1)-a-b-1.$$
If $c+1<a+b$ then $k< (a+1)(b+1)/2$ and $(a+1)(b+1)/2>a+b$.

Fix a linear space $L''$ of codimension $a+b+1$ in $\PP^{N''}$, which
contains the span of the $Q_i$'s.
Since the points $Q_i$'s are general in $X''$, it follows from
the Theorem 2.6 in \cite{CC1} (it is a generalization of the ``trisecant lemma'')
that the linear space $L''$ does not meet $X''$ in other points.
Moreover $L''$ is not tangent to $X''$ at any of the points $Q_i$'s.

Let $L'$ be a hyperplane in $\PP^{N'}$, $N'=(a+1)(b+1)c-1$, which is tangent
to  $X'$ at the points $P_i$'s. The hyperplane $L'$ exists, since
by assumption 
$$ k(\dim(X')+1) < (a+1)(b+1)(c+1)-c(a+b+c)< N-1.$$

Let $L$ be the linear span of $L'$ and $L''$.
$L$ has codimension $a+b+2$ and it is tangent to $X$ at the 
$k$ points $P_1, \dots, P_k$,
since it contains the tangent spaces to $X'$ at the $P_i$'s,
moreover it contains the points $Q_i$'s, so it contains the
fiber $\PP^c$ passing through each $P_i$. 

We want to exclude that $L$ is tangent to $X$ at any other point $P\neq P_i$.
Call $Q$ the projection of $P$ to $X''$. If $L$ is tangent to $X$ at $P$,
then it must contain the fiber $\PP^c$ passing through $P$,
thus it contains $Q$. This proves that $Q$ is one of the $Q_i$'s
(say $Q=Q_1$), since $L$ does not meet $X''$ elsewhere.
But then $L$ contains the fibers $\PP^a$ and $\PP^b$ at two points 
$P,P_1$ with the same projection to $X''$.
Thus it contains these fibers at any point of the line $\ell$
joining $P,P_1$. As $\ell$ contains $Q_1$, we get a contradiction, since
$L''=L\cap \PP^{N''}$ is not tangent to $X''$ at $Q_1$.
\end{proof}

\begin{cor}\label{trex} Let $X$ be the product of three projective spaces  
$X=\PP^a\times\PP^b\times\PP^c$, $2<a\leq b \leq c$,
naturally embedded in $\PP^N$, with $N=(a+1)(b+1)(c+1)-1$. Then for
$$k{\le}\frac {(a+1)(b+1)(c+1)}{a+b+c+1}-c-1,$$
$X$ is $k$-identifiable.
\end{cor}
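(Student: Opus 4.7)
The plan is to deduce the corollary as an immediate consequence of Theorem~\ref{stra} combined with Proposition~\ref{wd-id}. In other words, all the geometric work has already been done in the construction of the linear space $L$ in Theorem~\ref{stra}; the corollary is a formal extraction.

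First, I would fix $k$ general points $x_1,\ldots,x_k\in X$ with $k$ satisfying the bound
$$k\le \frac{(a+1)(b+1)(c+1)}{a+b+c+1}-c-1,$$
and set $V:=\langle {\mathbb T}_{x_1}X,\ldots,{\mathbb T}_{x_k}X\rangle$. By Theorem~\ref{stra}, there exists a linear subspace $L\subseteq \PP^N$ of codimension $a+b+2$ which contains $V$ and which is not tangent to $X$ at any point outside $\{x_1,\ldots,x_k\}$.

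Next comes the decisive (purely formal) observation: since $V\subseteq L$, any tangent space ${\mathbb T}_yX$ that happens to be contained in $V$ is a fortiori contained in $L$. Hence if ${\mathbb T}_yX\subseteq V$ for some $y\in X$, then $L$ is tangent to $X$ at $y$, and Theorem~\ref{stra} forces $y\in\{x_1,\ldots,x_k\}$. This is precisely the hypothesis of Proposition~\ref{wd-id}, so we conclude that $X$ is $k$-identifiable.

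There is no genuine obstacle beyond what is already proved in Theorem~\ref{stra}; the argument just passes a non-tangency property from the larger ambient $L$ down to the subspace $V$ it contains. The one point worth a quick sanity check is dimensional: for the range of $k$ in the hypothesis, the span $V$ genuinely fits inside $L$, so the implication is not vacuous. This is automatic from the stated bound on $k$, as $k(a+b+c+1)$ stays below $(a+1)(b+1)(c+1)-(a+b+2)$ with room to spare because $c\ge 2$.
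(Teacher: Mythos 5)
Your proposal is correct and is essentially the paper's own argument: the authors dispose of the corollary by citing Theorem~\ref{stra} together with Proposition 2.4 of \cite{CO}, which is exactly Proposition~\ref{wd-id}, and your explicit chain (tangent space contained in the span $V$ implies contained in $L$, hence tangency of $L$, hence the point is one of the $x_i$) is the intended one-line deduction spelled out. Nothing further is needed.
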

\begin{proof} Follows immediately from the previous Theorem and \cite{CO}.
\end{proof}

The identifiability of products of three projective spaces has been studied
by a long list of authors, who refined the celebrated Kruskal's bound
for arbitrary tensors. 	We mention De Lauthawer's results for unbalanced
tensor (\cite{DeL}), and the general bounds found by the second and third
authors in \cite{CO}.

We believe that the bound of Corollary \ref{trex}, at least for some balanced case,
is  the best known result for tensors of type $a,b,c$.

\section{Inductive bounds for the identifiability of general tensors}\label{gener}

The same procedure we used for products of many 
projective lines and planes, based on the bound found in Corollary
\ref{trex}, can produce results 
for {\it cubic} tensors, which, in some cases, are far beyond any
known result on the identifiability problem.

Then, with the above notation, we have:

\begin{thm}\label{cub} For $n\geq 3$, let $X$ be the product of 
$n$ copies of $\PP^a$, naturally embedded in $\PP^N$,
with $N=(a+1)^n-1$. Then for $r<(a+1)^n-(3a+1)(a+1)^{n-2} $ and for
$k\leq (r+1)/(an+1)$, a general linear
subspace of dimension $r$ which is tangent to $X$
at $k$ points, is not tangent to $X$ elsewhere. 

As a consequence, we get that $X$ is $k$-identifiable, for 
$$k\leq \frac{(a+1)^n-(3a+1)(a+1)^{n-2}}{an+1}.$$ 
\end{thm}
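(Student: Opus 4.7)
The plan is induction on $n \ge 3$, following the pattern of Theorems \ref{manyp1}, \ref{manyp2}, and \ref{manyp3}: the base $n = 3$ will be extracted from Strassen's Theorem \ref{stra}, and the inductive step will be the Main Lemma \ref{ml} applied to $X = (\PP^a)^{n-1}$ with $m = a$.

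For the base case, I would specialize Theorem \ref{stra} to the equal-sided product $a = b = c$. The resulting $k$-bound $\frac{(a+1)^3}{3a+1} - (a+1)$ simplifies to $\frac{a(a-1)(a+1)}{3a+1}$, which matches exactly the bound claimed by Theorem \ref{cub} at $n = 3$. The ambient linear subspace has codimension $2a + 2$, so dimension $(a+1)^3 - 2a - 3$, which for $a \ge 2$ comfortably exceeds the target $r^*_3 := (a+1)^3 - (3a+1)(a+1) - 1 = a(a-1)(a+1) - 1$; Remark \ref{menok} then brings the dimension of the tangent subspace down to the claimed range.

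For the inductive step, assume the theorem for $n-1$. By Remark \ref{menok} it suffices to treat the maximal dimension $r^* := (a+1)^n - (3a+1)(a+1)^{n-2} - 1$ together with an arbitrary $k$ with $k \le (r^* + 1)/(an+1)$. Set $k' := \lceil k/(a+1) \rceil$ and $r'^* := (a+1)^{n-1} - (3a+1)(a+1)^{n-3} - 1$; a direct calculation gives the matching identity $(a+1) r'^* + a = r^*$. The plan is to verify the inductive hypothesis $k' \le (r'^* + 1)/(an - a + 1)$ together with the Main Lemma hypothesis $r'^* + 1 \ge (an+1) k'$, and then apply Lemma \ref{ml} to conclude the tangency property at $((a+1)k', r^*)$ for $Y = (\PP^a)^n$. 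Both inequalities rest on the factorization $r'^* + 1 = a(a-1)(a+1)^{n-3}$, which grows exponentially in $n$ while the relevant divisors grow only linearly, so they hold for all $n$ large enough and reduce to a finite check at small $n$. Passing from tangency at $(a+1)k'$ general points of $Y$ to tangency at $k \le (a+1)k'$ general points of $Y$ is then handled by semicontinuity, and the final $k$-identifiability statement follows immediately from Proposition \ref{wd-id}.

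The main obstacle I anticipate is arithmetic rather than geometric: when $(a+1) \nmid k$ and $k$ saturates its bound, $\lceil k/(a+1) \rceil$ can overshoot the nominal Main Lemma threshold $(r'^*+1)/(an+1)$ by one. My plan to close this edge case is to exploit the stronger codimension bound $2a + 2$ supplied by Strassen (and propagated through the induction, yielding an effectively larger admissible $r'^*$ than the theorem's inductive record), giving $r'^*$ enough slack to satisfy the Main Lemma hypothesis even at saturation; alternatively, one can run the proof of Lemma \ref{ml} with an unbalanced distribution of the tangent points across the $(a+1)$ fibers $X \times \{p_j\}$ to absorb the same discrepancy.
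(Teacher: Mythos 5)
Your proposal is correct and follows essentially the same route as the paper, whose own proof of Theorem \ref{cub} is only a sketch deferring to the $a=1,2,3$ arguments: induction on $n\ge 3$ with Theorem \ref{stra} (specialized to $a=b=c$) as the base case and the Main Lemma \ref{ml} with $m=a$, together with Remark \ref{menok} and semicontinuity, as the inductive engine. Your explicit verification of the identity $(a+1)r'^*+a=r^*$ and your handling of the $\lceil k/(a+1)\rceil$ overshoot supply exactly the ``straightforward details'' the paper leaves to the reader (the latter point is in fact glossed over even in the paper's written-out $\PP^1$ and $\PP^2$ cases).
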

\begin{proof} The proof is absolutely similar to the ones 
of the cases {$a=1,2 ,3$ given above. We may assume $a\ge 4$.} It goes by induction on $n\geq 3$,
and uses { Theorem \ref{stra}} as a starting point.

We leave the straightforward details to the reader.
\end{proof}

We recall that we defined, in the introduction, the critical value
$$k_c=  \frac{\prod_{i=1}^q(a_i+1)}{1+\sum_{i=1}^q a_i}$$
which is essentially the maximum for which $k$-identifiability 
can hold. Then the previous bound proves that $X$ is $k$-identifiable, for 
$$k{\le} \frac{a(a-1)}{(a+1)^2}\ k_c.$$

Even for the case of rectangular tensors, we are able to prove 
{some} results, using the same procedure.

\begin{thm}\label{theorgen}
Let $X$ be the product of $q\geq 3$ projective spaces
$X=\PP^{a_1}\times\cdots\times\PP^{a_q}$, naturally embedded in $\PP^N$,
with {$N=-1+ \Pi_{i=1}^{q} (a_i+1)$. Then for 

$r<\Pi_{i=1}^{q}(a_i+1)-(a_1+a_2+a_3+1)\Pi_{i=3}^{q}(a_i+1)$ and for
$k\leq (r+1)/(1+\sum_{i=1}^{q} a_i)$, a general linear
subspace of dimension $r$ which is tangent to $X$
at $k$ points, is not tangent to $X$ elsewhere. 

As a consequence, we get that $X$ is $k$-identifiable, for 
$$k\leq \frac{\Pi_{i=1}^{q} (a_i+1)-(a_1+a_2+a_3+1)\Pi_{i=3}^{q}(a_i+1)}{1+\sum_{i=1}^q a_i} .$$}
\end{thm}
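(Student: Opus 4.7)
The plan is to proceed by induction on $q \geq 3$, with Theorem \ref{stra} providing the base case and Lemma \ref{ml} driving the inductive step; the argument mirrors the proofs of Theorems \ref{manyp1}, \ref{manyp2}, and \ref{cub}. After reordering so that $a_1 \leq a_2 \leq a_3$, the base case $q = 3$ follows directly from Theorem \ref{stra}: the codimension $a_1+a_2+2$ subspace produced there has dimension strictly larger than the bound claimed in Theorem \ref{theorgen}, so Remark \ref{menok} reduces the dimension $r$ down to the required range.

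For the inductive step, by Remark \ref{menok} it suffices to treat the maximum value
$$r+1 = \prod_{i=1}^{q}(a_i+1) - (a_1+a_2+a_3+1)\prod_{i=3}^{q}(a_i+1).$$
The essential observation is that this factors as $r+1 = (a_q+1)\,R$, where
$$R = \prod_{i=1}^{q-1}(a_i+1) - (a_1+a_2+a_3+1)\prod_{i=3}^{q-1}(a_i+1)$$
is precisely the maximum allowed at stage $q-1$. Given $k \leq (r+1)/(1+\sum_{i=1}^q a_i)$, I would set $k' = \lceil k/(a_q+1)\rceil$. A routine arithmetic check---of exactly the type carried out in the proofs of Theorems \ref{manyp1} and \ref{manyp2}---shows that $k' \leq R/(1+\sum_{i=1}^{q-1}a_i)$, so the pair $(R-1, k')$ falls inside the inductive hypothesis for $X' = \PP^{a_1} \times \cdots \times \PP^{a_{q-1}}$. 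That hypothesis then supplies a general linear subspace of $\PP^{N'}$ of dimension $R-1$ tangent to $X'$ at $k'$ general points and not tangent to $X'$ elsewhere. Applying Lemma \ref{ml} with $m=a_q$ produces a general linear subspace of $\PP^N$ of dimension $(a_q+1)R - 1 = r$ tangent to $X = X' \times \PP^{a_q}$ at $(a_q+1)k' \geq k$ general points, and not tangent elsewhere; the corresponding statement at exactly $k$ general points then follows by specialization. The identifiability conclusion is immediate from Proposition \ref{wd-id}.

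The main obstacle will be the arithmetic verification $k' \leq R/(1+\sum_{i=1}^{q-1}a_i)$, which is equivalent to the inequality $R\,a_q \geq (1+\sum_{i=1}^{q-1}a_i)(1+\sum_{i=1}^q a_i)$. Since $R$ carries the factor $\prod_{i=3}^{q-1}(a_i+1)$, growing exponentially in $q$, this will be comfortable for all but finitely many small cases; the residual cases can be absorbed into the base of the induction by grouping the first few factors together and applying Theorem \ref{stra} to the grouped product, in the same divide-and-conquer spirit used to launch the induction in Theorem \ref{manyp1}.
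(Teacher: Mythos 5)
Your strategy is exactly the one the paper intends: the paper gives no written proof of Theorem \ref{theorgen}, asserting only that it follows ``by the same procedure'' as Theorem \ref{cub}, whose proof is in turn left to the reader as an induction via Lemma \ref{ml} with Theorem \ref{stra} as the base case. Your identification of the factorization $r+1=(a_q+1)R$, with $R$ the maximal value at stage $q-1$, and of the ceiling arithmetic as the crux, is precisely what the omitted details amount to (compare the explicit checks in the proofs of Theorems \ref{manyp1} and \ref{manyp2}).

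That said, two steps of your writeup would fail as stated. First, your arithmetic target is the wrong inequality: verifying $k'\le R/(1+\sum_{i=1}^{q-1}a_i)$ only places $(R-1,k')$ inside the inductive hypothesis, whereas Lemma \ref{ml} separately requires $r'+1\ge(n+m+1)k'$, i.e.\ $R\ge(1+\sum_{i=1}^{q}a_i)k'$ with $m=a_q$ --- a strictly stronger constraint, and the one actually threatened by the $+1$ from the ceiling (this off-by-one is present, silently, in the paper's own proofs of Theorems \ref{manyp1} and \ref{manyp2} as well). Second, and more seriously, your proposed rescue of the residual small cases by ``grouping the first few factors and applying Theorem \ref{stra} to the grouped product'' does not work: the Segre variety $X=\PP^{a_1}\times\cdots\times\PP^{a_q}$ is a proper subvariety of the Segre variety of $\PP^{(a_1+1)(a_2+1)-1}\times\PP^{a_3}\times\cdots\times\PP^{a_q}$, and general points of $X$ are not general points of the larger variety, so a generic-tangency statement for the grouped Segre gives no information about tangency along $X$. (The ``divide and conquer'' mentioned before Theorem \ref{manyp1} is a memory-saving device inside a computer calculation, not a mathematical reduction of this kind.) Finally, note that Theorem \ref{stra} assumes $2<a_1$, a hypothesis absent from Theorem \ref{theorgen}; since triples such as $(2,3,3)$ and $(2,5,5)$ appear in the table of exceptions in section \ref{gener}, the base case genuinely requires this restriction or a separate treatment, and your proposal does not supply one.
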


Again, notice that we get $k$-identifiability for
$$k{\le}\frac
{a_1a_2-a_3}{(a_1+1)(a_2+1)}\ k_c.$$

Of course, the previous bound changes if one reorders the $a_i$ suitably.
Notice that the previous thereom requires $a_1a_2>a_3$ in order to give a an 
effective range of values for $k$. Moreover, one of the conditions among  
$a_1\gg a_3$, $a_2\gg a_3$ and $a_1a_2\gg a_3$  is strongly preferable 
to have a larger range of values for $k$.
\smallskip

We strongly believe that some {\it ad hoc} procedure, as well as
the improvements of our computational facilities, for the starting
point of the induction, are suitable to produce advancement
in the inequalities of our results.

Let us stress that the previous bounds provide also some answers 
to the problem of finding the dimension of secant varieties to Segre
varieties (i.e. to the dimension of paces of tensors of given rank).

\begin{cor} \label{corgen} Let $X$ be the product of $q\geq 3$ projective spaces
$X=\PP^{a_1}\times\cdots\times\PP^{a_q}$.
If $$k\leq \frac{\Pi_{i=1}^{q} (a_i+1)-(a_1+a_2+a_3+1)\Pi_{i=3}^{q}(a_i+1)}{1+\sum_{i=1}^{q} a_i}$$
then the dimension of the $k$-secant variety $S_k(X)$ is the expected one,
namely it is equal to  $k(1+\sum_{i=1}^q a_i)-1$.
\end{cor}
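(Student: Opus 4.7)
The plan is to deduce the dimension statement directly from the $k$-identifiability of $X$, which has already been established in Theorem \ref{theorgen} under the same bound on $k$. Recall that, under the hypothesis on $k$, the tangency condition of Theorem \ref{theorgen} combined with Proposition \ref{wd-id} guarantees that a general element of $S_k(X)$ admits a unique decomposition as a sum of $k$ elements of $X$.

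Next I would record the universal upper bound $\dim S_k(X) \leq k(1+\sum_{i=1}^q a_i) - 1$, which is immediate from the definition of $S_k(X)$ as the Zariski closure of the image of the natural rational map
$$\psi\colon X^k \times \PP^{k-1} \longrightarrow \PP^N, \qquad \bigl((x_1,\ldots,x_k),\ (\lambda_1{:}\cdots{:}\lambda_k)\bigr) \longmapsto \bigl[\lambda_1 x_1 + \cdots + \lambda_k x_k\bigr],$$
whose source has dimension $k\sum_{i=1}^q a_i + (k-1) = k(1+\sum_{i=1}^q a_i) - 1$.

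To upgrade the inequality to an equality I would argue by contradiction. If $\dim S_k(X) < k(1+\sum_{i=1}^q a_i) - 1$, then $\psi$ is not generically finite, so the fiber over a general $p\in S_k(X)$ has positive dimension. Since the symmetric group $S_k$ permuting the summands acts with finite fibers on the source, this would force $p$ to admit infinitely many genuinely distinct (not merely reordered) decompositions as a sum of $k$ points of $X$, contradicting $k$-identifiability. Hence equality must hold, proving the claim. There is no substantive obstacle here: the corollary is essentially a formal consequence of Theorem \ref{theorgen} together with the Terracini-type principle that $k$-identifiability forces the $k$-secant variety to have the expected dimension.
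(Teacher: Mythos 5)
Your argument is correct and is precisely the reasoning the paper leaves implicit: Corollary \ref{corgen} is stated without proof as a formal consequence of Theorem \ref{theorgen}, and the standard fact that $k$-identifiability forces the abstract secant map to be generically finite (hence $S_k(X)$ to attain the expected dimension, which here is below $N$ since $k<k_c$) is exactly the intended deduction. No gaps.
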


\begin{rem}
One should compare the previous result with the result of Gesmundo, \cite{Ges}, Theor. 1.1,
who proved that the dimension of $k$-secant variety to 
$\PP^{a_1}\times\ldots\times\PP^{a_q}$
is the expected one for $k\le \Theta_q k_c$, where $\Theta_q$ is a 
constant depending only on $q$. Moreover, in the case where $a_i+1$ 
are powers of $2$, then $\Theta_q\to 1$ when $q\to\infty$.
\end{rem}

Next, we show a list of Segre products for which $k$-identifiability
does not hold.  We refer to  section 5 of \cite{CO} 
for further details.
\smallskip

{\bf Table of known cases when $a_q\le \prod_{i=1}^{q-1}(a_i+1)-
\left(1+\sum_{i=1}^{q-1}a_i\right)$ and the decomposition of the general tensor
of rank $k<k_c$ is not unique.}

{\small
$$\begin{array}{ccl}
(a_1,\ldots,a_q)&k& \textrm{number of decompositions}\\
\hline\\
(2,3,3)&5&\infty^1\\
\hline\\
(2,b,b)\quad b\textrm{\ even}&
\frac{3b+2}{2}&\infty^{\frac b2+1}\\
\hline\\
(1,1,n,n)&2n+1&\infty^1\\
\hline\\
(3,3,3)&6&2\\ 
\hline\\ 
(2,5,5)&8& \textrm{finite}, \ge 6\\
\hline\\
(1,1,1,1,1)&5&2\\
\hline\\
\end{array}$$
}
A straightforward application of the algorithm presented in the last section shows the following

\begin{thm}\label{listcomplete}The previous list is complete for all $(a_1,\ldots, a_q)$ such that
$\prod_{i=1}^q(a_i+1)\le 100 $.
\end{thm}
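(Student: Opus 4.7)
The plan is entirely computational: since the hypothesis bounds $\prod_{i=1}^q(a_i+1)\le 100$, the set of tuples $(a_1,\ldots,a_q)$ to examine is finite and small enough to enumerate exhaustively. After discarding the unbalanced range (where $a_q > \prod_{i=1}^{q-1}(a_i+1)-(1+\sum_{i=1}^{q-1}a_i)$, already covered by Corollary \ref{corunbal}), the task reduces to checking, for each remaining tuple and for each integer $k$ with $2\le k<k_c$, whether $X=\PP^{a_1}\times\cdots\times\PP^{a_q}$ is $k$-identifiable, then matching the outcomes against the displayed table.

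First I would iterate through all admissible tuples $(a_1,\ldots,a_q)$ with $\prod(a_i+1)\le 100$ in the balanced range. For each such $X$ and each relevant $k$, I would invoke the algorithm described in section \ref{algo}. In view of Proposition \ref{wd-id}, it suffices to exhibit $k$ specific points $x_1,\ldots,x_k\in X$ such that $\langle \mathbb{T}_{x_1}X,\ldots,\mathbb{T}_{x_k}X\rangle$ is tangent to $X$ only at those points. Concretely, one picks $k$ points with random (or pseudo-random) coordinates, assembles the matrix whose columns span the combined tangent space, and then tests the rank condition governing whether any additional tangent hyperplane drops rank at some further $x\in X$; this is the weak-defectivity test of \cite{CC}, recast as a numerical linear-algebra computation. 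Working over a large finite field keeps the arithmetic exact and fast, while genericity of the chosen points is guaranteed with high probability, so a single successful check certifies $k$-identifiability.

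Next, for the six tuples appearing in the table I would confirm, again via the same algorithm (or by direct inspection, in view of the references \cite{BC12,CO,CMO,Str}), that the predicted non-uniqueness actually occurs at the predicted $k$: either the span of tangent spaces is tangent along a positive-dimensional family (infinitely many decompositions) or a second isolated decomposition is detected. Finally, I would collect the output: every $(a_1,\ldots,a_q)$ with $\prod(a_i+1)\le 100$ in the balanced range and every $k<k_c$ either matches an entry of the table or else passes the identifiability test. This exhausts the classification in the stated range and proves the theorem.

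The main obstacle is computational, not conceptual. Enumerating the tuples is cheap, but the ambient dimension $N=\prod(a_i+1)-1$ can reach $99$, so the tangent-span matrix may have up to roughly $99\times 99$ size, and the test must be repeated for each $k$ up to $k_c$. The critical design choice is therefore the implementation of the weak-defectivity test: one must avoid symbolic Gröbner-basis computations (as in \cite{CO}) and instead reduce everything to rank checks of explicit matrices over a finite field, as announced in the introduction. Once that engine is in place, the finiteness of the list of tuples makes the exhaustive verification tractable, and the theorem follows by simply aggregating the results.
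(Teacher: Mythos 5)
Your proposal matches the paper's approach: the authors prove this theorem precisely by an exhaustive computer run of the algorithm in Section~\ref{algo} (random points on $X$, equations of the span of tangent spaces, and a Jacobian rank check certifying $k$-identifiability via Proposition~\ref{wd-id}) over the finitely many balanced tuples with $\prod(a_i+1)\le 100$, with the table entries handled by the cited earlier work. Your additional implementation remarks (finite-field arithmetic, avoiding Gr\"obner bases) are consistent with what the paper announces in the introduction.
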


\section{The unbalanced case}

Consider again $X=\PP^{a_1}\times\cdots\times\PP^{a_q}$
be a Segre product, canonically embedded in $\PP^N$, where
$N+1=\prod_{i=1}^q (a_i+1)$. We may assume $a_1\le\ldots\le a_q$ {and $q\ge 3$}.

When the dimension of $a_q$ is much bigger than the dimension of the other factors, then the tensor decomposition has a not expected behavior, that can be understood by considering the Segre variety consisting of only two factors,
$\PP^{a_q}$ and the product of all the others.

In \cite{AOP} it was settled completely the dimension of secant variety in the unbalanced case
$a_q\ge \prod_{i=1}^{q-1}(a_i+1)-\left(\sum_{i=1}^{q-1}a_i\right)+1$.
In these cases the last secant variety, which does not fill the ambient space, 
always has dimension smaller than expected.

{We consider now the $k$-identifiability. It turns out (see Corollary \ref{corunbal})
that the analogous condition to be unbalanced is a bit weaker, namely 
$a_q\ge \prod_{i=1}^{q-1}(a_i+1)-\left(\sum_{i=1}^{q-1}a_i\right)$. This explains why in the table
shown in \S \ref{gener} we considered just the remaining cases.

The following Proposition is certainly well known, we prove it for the convenience of the reader.

\begin{prop}\label{rankab} The general tensor of rank {{$k\le \prod_{i=1}^{q-1}(a_i+1)-\left(1+\sum_{i=1}^{q-1}a_i\right)$} in
$\PP(\CC^{a_1+1}\otimes\ldots\otimes\CC^{a_q+1})$
has a unique decomposition as sum of $k$} decomposable summands for $a_q\ge \prod_{i=1}^{q-1}(a_i+1)-\left(2+\sum_{i=1}^{q-1}a_i\right)$.
\end{prop}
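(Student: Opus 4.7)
The strategy is to flatten $T$ along the last factor and recover the decomposition from the image subspace. Set $V := \bigotimes_{i=1}^{q-1}\CC^{a_i+1}$, $N' := \prod_{i=1}^{q-1}(a_i+1) = \dim V$, and $d := \sum_{i=1}^{q-1}a_i$, and let $Y\subset \PP(V)$ be the Segre image of $\PP^{a_1}\times\cdots\times\PP^{a_{q-1}}$, so that $\dim Y = d$. A tensor of rank $k$ in $V\otimes \CC^{a_q+1}$ may be written $T = \sum_{j=1}^k w_j\otimes u_j$ with $[w_j]\in Y$ and $u_j\in\CC^{a_q+1}$.

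First I would observe that the numerical hypotheses force $k\le a_q+1$: indeed $k\le N'-1-d$ by hypothesis on $k$, and $a_q+1\ge N'-1-d$ by hypothesis on $a_q$. Hence for a general rank-$k$ tensor $T$ the vectors $u_1,\dots,u_k$ are linearly independent, so the image $W$ of the induced contraction $(\CC^{a_q+1})^{*}\to V$ is exactly the $k$-dimensional subspace $\langle w_1,\dots,w_k\rangle$. Any alternative decomposition $T = \sum_{j=1}^k w'_j\otimes u'_j$ with $[w'_j]\in Y$ must then satisfy $w'_j\in W$, hence $[w'_j]\in \PP(W)\cap Y$.

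The geometric heart of the argument is to show that $\PP(W)\cap Y = \{[w_1],\dots,[w_k]\}$. The subspace $\PP(W)$ has dimension $k-1$ and codimension $N'-k\ge d+1$ by our bound on $k$, and it is spanned by $k$ general points of $Y$. Enlarging $\PP(W)$ (if necessary) to a general linear subspace $L$ of codimension exactly $d+1$ still containing $[w_1],\dots,[w_k]$, the generalized trisecant lemma of Chiantini--Ciliberto (Theorem~2.6 of \cite{CC1}, the same tool already invoked in the proof of Theorem~\ref{stra}) gives $L\cap Y = \{[w_1],\dots,[w_k]\}$, and the same conclusion then holds a fortiori for $\PP(W)\cap Y$.

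Once the points in $\PP(W)\cap Y$ are pinned down to be exactly the $[w_j]$, each $w'_j$ is a nonzero scalar multiple of some $w_{\sigma(j)}$, and the linear independence of $w_1,\dots,w_k$ in $V$ forces $u'_j$ to be the matching scalar multiple of $u_{\sigma(j)}$; thus the summands $w'_j\otimes u'_j$ coincide with the $w_j\otimes u_j$ up to reordering and absorbing scalars, giving uniqueness. The main delicate point is the appeal to the trisecant lemma: our $(k-1)$-plane is not an arbitrary one, but rather the span of $k$ prescribed general points of the non-degenerate irreducible variety $Y$, so one must verify that the Chiantini--Ciliberto statement covers this form, which is precisely its content and so serves as a citation rather than an additional argument.
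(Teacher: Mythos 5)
Your proposal is correct and follows essentially the same route as the paper: flatten along the last factor, use the hypothesis on $a_q$ to see that the contraction has rank $k$ with image spanned by the points $w_1\otimes\cdots$ of the smaller Segre variety $Y$, and invoke Theorem~2.6 of \cite{CC1} to conclude that the projectivized image meets $Y$ only in those $k$ points. The paper leaves the final deduction of uniqueness implicit (``the claim follows''), whereas you spell it out; the mathematical content is the same.
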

\begin{proof}
Let $\phi\in\CC^{a_1+1}\otimes\ldots\otimes\CC^{a_q+1}$ be general of rank {$k\le \prod_{i=1}^{q-1}(a_i+1)-\left(1+\sum_{i=1}^{q-1}a_i\right)$}.
It induces the flattening  contraction operator 
$$A_{\phi}\colon(\CC^{a_q+1})^{\vee}\to\CC^{a_1+1}\otimes\ldots\otimes\CC^{a_{q-1}+1}$$
which has still rank {$k$}, by the assumption $a_q\ge \prod_{i=1}^{q-1}(a_i+1)-\left(2+\sum_{i=1}^{q-1}a_i\right)$. 
Indeed, if {$\phi=\sum_{i=1}^{k}v_{i,1}\otimes v_{i,2} \otimes \cdots \otimes v _{i,q}$}
with $v_{i,j}\in \CC^{a_j+1}$,
where $v_{i,q}$ can be chosen as part of a basis of $\CC^{a_q+1}$,
 then $\textrm{Im\ }A_{\phi}$ is the span of the representatives
of $v_{i,1}\otimes\ldots\otimes v_{i,q-1}$ for {$i=1,\ldots , k$}.
It is well known that the projectification of this span,
whose dimension is smaller than the codimension of the Segre variety
$Y=\PP^{a_1}\times\ldots\times\PP^{a_{q-1}}\subset\PP(\CC^{a_1+1}\otimes\ldots\otimes\CC^{a_{q-1}+1})$, meets $Y$
only in these {$k$} points
(see again, for example, the Theorem 2.6 in \cite{CC1}).
The claim follows.
\end{proof}

{The case $q=3$ of next Propositions \ref{dif0} and \ref{different} is contained in Prop. 5.4 of \cite{CO}.}
\begin{prop}\label{dif0}
If $a_q\ge \prod_{i=1}^{q-1}(a_i+1)-\left(\sum_{i=1}^{q-1}a_i\right)+1$ 
then the rank of a {general} tensor in 
 $\PP(\CC^{a_1+1}\otimes\ldots\otimes\CC^{a_q+1})$  is $\min\{a_q+1, 
 \prod_{i=1}^{q-1}(a_i+1)\}$. 

Moreover $\PP^{a_1}\times\ldots\times\PP^{a_q}$ is not $k$-identifiable for
$k>\prod_{i=1}^{q-1}(a_i+1)-\left(\sum_{i=1}^{q-1}a_i\right)$.
If $a_q= \prod_{i=1}^{q-1}(a_i+1)-\left(\sum_{i=1}^{q-1}a_i\right)$, 
then the rank of a {general} tensor in 

 $\PP(\CC^{a_1+1}\otimes\ldots\otimes\CC^{a_q+1})$  is $a_q+1$.
\end{prop}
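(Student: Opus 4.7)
My plan follows the flattening strategy already applied to prove Proposition \ref{rankab}. Abbreviate $M=\prod_{i=1}^{q-1}(a_i+1)$, $S=\sum_{i=1}^{q-1}a_i$, let $V=\CC^{a_1+1}\otimes\cdots\otimes\CC^{a_{q-1}+1}$ (so $\dim V = M$), and $Y=\PP^{a_1}\times\cdots\times\PP^{a_{q-1}}\subset\PP(V)$ (so $\dim Y = S$). Any tensor $\phi\in\CC^{a_1+1}\otimes\cdots\otimes\CC^{a_q+1}$ corresponds to the contraction $A_\phi\colon (\CC^{a_q+1})^\vee \to V$, and from the expansion $A_\phi(f)=\sum_i f(v_{i,q})\,v_{i,1}\otimes\cdots\otimes v_{i,q-1}$ one reads the inequality $\mathrm{rank}(A_\phi)\le$ tensor rank of $\phi$. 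Since for a generic $\phi$ the map $A_\phi$ attains its maximal rank $\min\{a_q+1,M\}$, this already gives the lower bound in each of the three statements.

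For the matching upper bound in the first assertion, I would argue that a generic $d$-dimensional subspace $W\subset V$, with $d=\min\{a_q+1,M\}$, is spanned by $d$ points of the affine cone $\widehat{Y}$: having such a basis $(w_i)$ of $W$, one expands $A_\phi$ as $A_\phi(f)=\sum_{i=1}^{d}f(u_i)\,w_i$ to obtain the decomposition $\phi=\sum_i w_i\otimes u_i$. When $a_q+1\ge M$ this is immediate because $Y$ spans $\PP(V)$ and therefore admits bases of rank-one vectors. When $a_q+1<M$, the hypothesis $a_q\ge M-S+1$ gives $\dim(\PP(W)\cap Y)=a_q+S-M+1\ge 2$, so the generic intersection $\PP(W)\cap Y$ is positive-dimensional and, for generic $W$, spans $\PP(W)$; choosing $d$ of its points in general position concludes.

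The non-identifiability statement is essentially the reverse direction. Fix $k>M-S$ and a generic decomposition $\phi=\sum_{i=1}^{k}w_i\otimes u_i$ with $w_i\in\widehat{Y}$ and $u_i\in\CC^{a_q+1}$ linearly independent (possible since $k\le a_q+1$). Then $W=\mathrm{Im}(A_\phi)=\langle w_1,\ldots,w_k\rangle$ has dimension $k$, and $\dim(\PP(W)\cap Y)=k+S-M\ge 1$. Each $w_i$ may therefore be continuously deformed inside $\PP(W)\cap Y$, with the $u_i$ simultaneously adjusted so that $\phi$ remains unchanged; this produces a positive-dimensional family of honestly distinct rank-$k$ decompositions, contradicting $k$-identifiability.

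The boundary case $a_q=M-S$ is where I expect the main obstacle. Flattening still delivers the lower bound $\mathrm{rank}\ge a_q+1$, and the upper bound again boils down to a generic $(a_q+1)$-dimensional $W\subset V$ being spanned by $a_q+1$ points of $\widehat{Y}$. Here however $\dim(\PP(W)\cap Y)=1$, so one has to verify that this irreducible curve is non-degenerate in $\PP(W)\cong\PP^{a_q}$, or equivalently that $X=\PP^{a_1}\times\cdots\times\PP^{a_q}$ is not $(a_q+1)$-defective at this exact borderline. The unbalanced results of Abo--Ottaviani--Peterson \cite{AOP} cover only the strict inequality $a_q\ge M-S+1$, so at the boundary one must either invoke an independent non-defectivity argument (Proposition 5.4 of \cite{CO} supplies it for $q=3$) or show that the differential of the span map $\widehat{Y}^{a_q+1}\to\mathrm{Grass}(a_q+1,V)$ is surjective at a well-chosen $(a_q+1)$-tuple, exploiting that the dimension count $S\ge M-(a_q+1)$ is just barely satisfied.
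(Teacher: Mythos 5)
Your route is genuinely different from the paper's. The paper settles the range $a_q\ge \prod_{i=1}^{q-1}(a_i+1)-\sum_{i=1}^{q-1}a_i+1$ by quoting Theorem 4.4 of \cite{AOP} (this is exactly the unbalanced range of Definition 4.2 there, and defectivity of the intermediate secant varieties yields the non-identifiability clause), and it handles the boundary case $a_q=\prod_{i=1}^{q-1}(a_i+1)-\sum_{i=1}^{q-1}a_i$ by running the inductive machinery of \cite{AOP}: checking that $T(a_1,\ldots,a_q;a_q;0^q)$ reduces to true subabundant conditions and $T(a_1,\ldots,a_q;a_q+1;0^q)$ to a true superabundant one, so $S_k(X)$ has the expected dimension for $k\le a_q$ and fills the ambient space for $k=a_q+1$. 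You instead give a self-contained argument: the flattening $A_\phi$ gives the lower bound on the rank, and both the upper bound and the non-identifiability come from the geometry of $\PP(\mathrm{Im}\,A_\phi)\cap Y$. This is precisely the style of Propositions \ref{rankab} and \ref{different} of the paper (your third paragraph is essentially the proof of Proposition \ref{different}, and the cleaner way to phrase it is that \emph{any} $k$ points of the positive-dimensional set $\PP(W)\cap Y$ spanning $\PP(W)$ yield a decomposition, the new $u_i$ being automatically nonzero because the flattening forces $\mathrm{rank}(\phi)=k$). What your approach buys is independence from the $T(\cdots)$ formalism; what it costs is that every step now leans on the non-degeneracy of general linear sections of $Y$.

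That is where your proposal has its one genuine gap, which you flag yourself: the boundary case $a_q=M-S$, where $\PP(W)\cap Y$ is a curve. But you overestimate what is missing. You do not need non-defectivity of $X=\PP^{a_1}\times\cdots\times\PP^{a_q}$ at $k=a_q+1$, nor a tangent-space computation for a span map; you only need that the general curve section of the irreducible non-degenerate variety $Y\subset\PP^{M-1}$ is non-degenerate in the cutting space $\PP(W)\cong\PP^{a_q}$. This is classical: a general hyperplane section of an irreducible non-degenerate variety of dimension at least $2$ is again irreducible and non-degenerate, and iterating $S-1$ times takes $Y$ down to the desired curve. (The same fact, in the range where the section has dimension at least $2$, is what silently justifies your sentence ``for generic $W$, spans $\PP(W)$'' in the unbalanced case, so you are already using it.) With that fact made explicit, your argument closes all three assertions without \cite{AOP}; as written, the third assertion is not proved.
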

\begin{proof}
When $a_q\ge \prod_{i=1}^{q-1}(a_i+1)-\left(\sum_{i=1}^{q-1}a_i\right)+1$, 
we are in the unbalanced case, 
according to Definition 4.2 of \cite{AOP} (in the defective setting,
the range of the unbalanced case is slightly bigger than in the weakly 
defective setting). In this case the statement follows from
 Theorem 4.4  of \cite{AOP}. 

When $a_q= \prod_{i=1}^{q-1}(a_i+1)-\left(\sum_{i=1}^{q-1}a_i\right)$, 
using the same technique, we show that the 
secant variety $S_k(\PP^{a_1}\times\ldots\times\PP^{a_q})$ has the expected dimension, 
for $k\le a_q$, and fills the  ambient space, for $k=a_q+1$. 

Indeed, with the notations of \cite{AOP}, condition
$T(a_1,\ldots, a_q;a_q;0^q)$ reduces to {$T(a_1,\ldots,a_{q-1},0;1;0^{q-1},a_q-1)$ and
$T(a_1,\ldots,a_{q-1},0;0;0^{q-1},a_q)$}
which are true and subabundant, while condition $T(a_1,\ldots, a_q;a_q+1;0,0,0)$ reduces 
to condition {$T(a_1,\ldots, a_{q-1},0;1;0^{q-1},a_q)$}  which is superabundant and true.
\end{proof}

\begin{prop}\label{different}
Assume $a_q\ge \prod_{i=1}^{q-1}(a_i+1)-\left(\sum_{i=1}^{q-1}a_i\right)$. 
Then the number of different decompositions of a general 
tensor of rank $k=\prod_{i=1}^{q-1}(a_i+1)-\left(\sum_{i=1}^{q-1}a_i\right)$ is ${D\choose k}$
where 
$D=\deg \PP^{a_1}\times\ldots\times\PP^{a_{q-1}}=\frac{(\sum_{i=1}^{q-1}a_i)!}
{a_1!\ldots a_{q-1}!}$. 
{This number is always bigger than $1$, so we have never identifiability.}
\end{prop}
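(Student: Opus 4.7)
The plan is to mirror the flattening argument of Proposition \ref{rankab}, but this time counting rank-$k$ decompositions instead of just proving uniqueness. The key observation is that in the critical case $k = \prod_{i=1}^{q-1}(a_i+1) - \sum_{i=1}^{q-1}a_i$, a numerical miracle occurs which makes the image of the flattening a linear subspace of \emph{complementary} dimension to the smaller Segre variety.

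Concretely, first I would let $\phi$ be general of rank $k$ and form the flattening $A_{\phi}\colon (\CC^{a_q+1})^{\vee}\to W:=\CC^{a_1+1}\otimes\ldots\otimes\CC^{a_{q-1}+1}$. The hypothesis $a_q\ge k$ ensures $a_q+1>k$, so for a generic decomposition $\phi=\sum v_{i,1}\otimes\cdots\otimes v_{i,q}$ the vectors $v_{i,q}$ can be taken linearly independent (exactly as in the proof of Proposition \ref{rankab}), hence $\operatorname{rk} A_{\phi}=k$. Set $V:=\operatorname{Im} A_{\phi}\subset W$; then $\phi$ canonically belongs to $V\otimes\CC^{a_q+1}$, and the $k$ points $P_i=[v_{i,1}\otimes\cdots\otimes v_{i,q-1}]$ of $Y:=\PP^{a_1}\times\cdots\times\PP^{a_{q-1}}\subset\PP(W)=\PP^M$ lie in $\PP(V)$.

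Next I would perform the dimension count. Since $\dim Y=\sum_{i=1}^{q-1}a_i$ and $\dim\PP(V)=k-1$, we get
$$\dim\PP(V)+\dim Y=(k-1)+\sum_{i=1}^{q-1}a_i=\prod_{i=1}^{q-1}(a_i+1)-1=M,$$
so $\PP(V)$ has complementary dimension to $Y$ in $\PP(W)$. By Bezout/degree, the scheme $\PP(V)\cap Y$ has length $D=\deg Y=\frac{(\sum_{i=1}^{q-1}a_i)!}{a_1!\cdots a_{q-1}!}$. Invoking the general-position statement of Theorem 2.6 of \cite{CC1} (applied to the $k$ general points $P_i$ sitting inside a linear span of the right codimension in $Y$), one concludes that $\PP(V)\cap Y$ consists of $D$ distinct reduced points, no $k$ of which fail to span $\PP(V)$.

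Finally I would establish the bijection between decompositions and $k$-subsets of $\PP(V)\cap Y$. Every rank-$k$ decomposition of $\phi$ projects to $k$ points of $Y$ forced to lie in $\PP(V)$, hence selects $k$ of the $D$ intersection points. Conversely, any $k$-subset of these $D$ points is a basis of $V$ by general position, and since $\phi\in V\otimes\CC^{a_q+1}$, expanding $\phi$ in this basis yields a unique complementary $k$-tuple of (nonzero, by generality of $\phi$) vectors in $\CC^{a_q+1}$, producing a genuine decomposition. The count is therefore $\binom{D}{k}$, and a direct comparison of $D$ with $k$ (the multinomial coefficient dominates the product-minus-sum for the range of parameters contemplated by the statement) gives $\binom{D}{k}>1$, ruling out identifiability. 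The main obstacle I anticipate is the general-position claim for $\PP(V)\cap Y$: $\PP(V)$ is not an arbitrary $(k-1)$-plane but is constrained to contain the $k$ general points $P_i$, so one must check that this constrained family is generic enough that the residual $D-k$ intersection points are simple and together with the $P_i$ form a set in linearly general position inside $\PP(V)$.
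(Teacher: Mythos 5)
Your proposal is correct and follows essentially the same route as the paper: the paper's proof simply reruns the flattening argument of Proposition \ref{rankab}, observes that $\dim\PP(\operatorname{Im}A_{\phi})=k-1$ now equals the codimension of $\PP^{a_1}\times\ldots\times\PP^{a_{q-1}}$ so the intersection consists of $D=\deg Y$ points, and notes that any $k$ of them give a decomposition. You supply more detail than the paper does (the Bezout count, the converse bijection, and the general-position caveat you rightly flag), but the underlying argument is identical.
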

\begin{proof}
We apply the same argument of the proof of Proposition \ref{rankab}.
We pick a general $\phi$ of rank $k$. The only difference is that, now, the dimension of the
projectification of $\textrm{Im\ }A_{\phi}$, which is $k-1$, equals the codimension
of $\PP^{a_1}\times\ldots\times\PP^{a_{q-1}}$. Thus we get $D$ points of intersection.
Any choice of {$k$} among these $D$ points 
yields a decomposition.
\end{proof}

\begin{cor}\label{corunbal}
Assume $a_q\ge \prod_{i=1}^{q-1}(a_i+1)-\left(\sum_{i=1}^{q-1}a_i\right)$. 
Then $\PP^{a_1}\times\ldots\times\PP^{a_{q}}$ is $k$-identifiable if and only if
$$k\le \prod_{i=1}^{q-1}(a_i+1)-\left(1+\sum_{i=1}^{q-1}a_i\right)$$
\end{cor}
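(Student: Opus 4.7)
For brevity, write $P=\prod_{i=1}^{q-1}(a_i+1)$ and $A=\sum_{i=1}^{q-1} a_i$, so that the standing hypothesis reads $a_q\ge P-A$ and the goal becomes: $\PP^{a_1}\times\cdots\times\PP^{a_q}$ is $k$-identifiable if and only if $k\le P-A-1$. The plan is to directly assemble the three preceding propositions, each of which handles a different slice of the range of $k$.

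For the sufficiency ($\Leftarrow$), suppose $k\le P-A-1$. The standing hypothesis $a_q\ge P-A$ is stronger than $a_q\ge P-A-2$, which is exactly the hypothesis required by Proposition \ref{rankab}. That proposition therefore applies and produces a unique rank-$k$ decomposition of the general tensor, which is precisely $k$-identifiability.

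For the necessity ($\Rightarrow$), suppose $k\ge P-A$, and split into cases. At the boundary $k=P-A$, Proposition \ref{different} exhibits $\binom{D}{k}$ distinct decompositions of a general tensor of rank $k$, a number asserted there to be strictly greater than $1$, so identifiability fails. For $k>P-A$ in the strict unbalanced range $a_q\ge P-A+1$, the first assertion of Proposition \ref{dif0} rules out $k$-identifiability directly. The remaining configuration is the borderline $a_q=P-A$ with $k\ge P-A+1$: here the last sentence of Proposition \ref{dif0} pins the generic rank at $a_q+1=P-A+1$, and a short computation shows $k_c=\frac{P(P-A+1)}{P+1}<P-A+1\le k$, so the general principle recalled in the introduction (uniqueness fails as soon as $k>k_c$) finishes the argument. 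The main potential obstacle is precisely this borderline case $a_q=P-A$, which slips between the hypotheses of the first assertion of Proposition \ref{dif0} and of Proposition \ref{different}; the generic-rank identity together with the elementary $k_c$ threshold inequality is what bridges the gap.
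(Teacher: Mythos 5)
Your proof is correct and follows the same route the paper intends (the corollary is stated without proof, as a direct assembly of Propositions \ref{rankab}, \ref{different}, and \ref{dif0} for the three ranges of $k$); your extra care with the borderline case $a_q=P-A$, $k>P-A$ via the generic rank and the $k_c$ threshold is sound. Note that this last case could also be dispatched more quickly by Proposition \ref{different} together with the paper's remark that $k$-identifiability implies $(k-1)$-identifiability.
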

}

\begin{rem}
We notice a misprint in the table in section 5 of \cite{CO}.
The condition for ``defective unbalanced'' should read as
$c\ge (a-1)(b-1)+3$ instead of $c\ge (a-1)(b-1)+1$.
The proof of Prop. 5.3 of \cite{CO} needs slight modifications accordingly, 
but the statement remains correct.
\end{rem}

\section{The algorithm}\label{algo}
The algorithm we have used has been implemented in Macaulay2 \cite{GS} and it 
can be found as ancillary file in the arXiv submission of this paper.

The steps are the following.

\begin{enumerate}
\item{} We choose $s$ random points $p_1,\ldots, p_s$ on the Segre variety $X$, 
working on an affine chart. The point $p_1$ can be chosen as $(1,0,\ldots)$
on each factor.

\item{} We compute the  equations of the span of tangent spaces
 $<T_{p_1},\ldots, T_{p_s}>$.

\item{} For any of the cartesian equations we compute its partial derivatives,
the common locus is the locus $C$ of points $p$ such that 
$T_pX\subset \langle T_{p_1},\ldots, T_{p_s}\rangle$.

\item{} We compute the rank of the jacobian matrix of $C$ at $p_1$.
If it is equal to the dimension of $X$ then $X$ is $k$-identifiable.
If it is smaller than the dimension of $X$ then a further analysis is required.
\end{enumerate}

\end{document}